\theoremstyle{plain}
\theoremstyle{plain}
\newtheorem{thm}{Theorem}
\newtheorem{prop}{Proposition}
\newtheorem{lem}{Lemma}
\theoremstyle{definition}
\newtheorem{exam}{Example}
\newtheorem{rem}{Remark}
\newtheorem{definition}{Definition}
\begin{document}
\setcounter{page}{1}

\title[A few remarks on bounded homomorphisms]{ A few remarks on bounded homomorphisms acting on topological lattice groups and topological rings }

\author[Omid Zabeti ]{Omid Zabeti}

\address{ Department of Mathematics, University of Sistan and Baluchestan, P.O. Box: 98135-674, Zahedan, Iran.}
\email{{o.zabeti@gmail.com}}

\subjclass[2010]{ 54H12, 20K30, 47B65, 13J99.}

\keywords{Locally solid $\ell$-group, bounded homomorphism, unbounded topology, Fatou property, topological ring, completeness.}

\date{Received: xxxxxx; Revised: yyyyyy; Accepted: zzzzzz.}

\begin{abstract}
Suppose $G$ is a locally solid lattice group. It is known that there are non-equivalent classes of bounded homomorphisms on $G$ which have topological structures. In this paper, our attempt is  to assign lattice structures on them. More precisely, we use of  a version of the remarkable Riesz-Kantorovich formulae  and Fatou property for bounded order bounded homomorphisms to allocate the desired structures. Moreover, we show that unbounded convergence on a locally solid lattice group is topological and we investigate some applications of it.
Also, some necessary and sufficient conditions for completeness of different types of bounded group homomorphisms between topological rings have been obtained, as well.
\end{abstract}
\maketitle





\section{Introduction and Preliminaries}
The concept of a lattice group ($\ell$-group, for short) was initially considered in \cite{B,C}. In addition, topological $\ell$-groups as an extension of topological Riesz spaces were investigated in \cite{S1,S2}. Since the most known classes of function spaces are Banach lattices: one of the most powerful tools in the theory of Banach spaces, and Riesz spaces are the fundamental basis of Banach lattices, these notions have been investigated extensively from the past until now. But topological $\ell$-groups are rarely utilized although in general, topological groups have many applications in other disciplines for example Fourier analysis. Recently, a suitable reference has been announced regarding basic properties of topological $\ell$-groups ( see \cite{H} for more details on these expositions).

On the other hand, in \cite{KZ}, Kocinac and the author, considered three different kinds of bounded homomorphisms on a topological group. They allocated each class of them to an appropriate topology and showed that they form again  topological groups. If the underlying group has a lattice structure ( for example topological $\ell$-groups), it is of interest to ask whether bounded homomorphisms can have a lattice construction, too?
This question for bounded order bounded operators on locally solid Riesz spaces have been answered affirmatively in \cite{EGZ}. Almost, the most fruitful structure for the lattice operations in order bounded operators is the remarkable Riesz-Kantorovich formulae ( see \cite[Theorem 1.18]{AB} for more information).
Thus, in prior to anything, for order bounded homomorphisms on topological $\ell$-groups, we need a version of this formulae; this is done recently in \cite{Z2}. Then, we can consider lattice structures for classes of bounded order bounded homomorphisms.
A related and major point to consider is that although some proofs in this paper might seem similar to the ones related to Riesz spaces at the first glance, It is obligatory to check them one by one because some known results in analysis rely heavily on scalar multiplication such as the Hahn-Banach theorem and some consequences of it; so that we can not expect them in topological $\ell$-groups. But order structure enables us to generalize some results in Riesz spaces which count on just group and order structures. Recently, among other things, some extensions of this kind, have been considered in \cite{Z2}.

We organize the paper as follows. First, we consider some preliminaries and terminology which will be used in the sequel. In Section 2, we investigate a method which enables us to allocate lattice structures on bounded homomorphisms between topological $\ell$-groups.
In fact, we use the Fatou property with a version of the Riesz-Kantorovich formulae to give a lattice structure to bounded order bounded homomorphisms. Also, we see that unbounded convergence in a locally solid $\ell$-group is topological and we state some points in this direction.

In Section 3, we show that each class of bounded group homomorphisms defined on a topological ring is topologically complete if and only if so is the underlying topological ring.

By a {\bf lattice group} ( $\ell$-group), we mean a group which is also a lattice at the same time. Observe that a subset $B$ in an abelian topological group $(G,+)$ is said to be {\bf bounded} if for each neighborhood $U$ of the identity, there exists a positive integer $n$ with $B\subseteq nU$, in which $nU=\{x_1+\ldots +x_n: x_i\in U\}$. An $\ell$-group $G$ is called {\bf Dedekind complete} if every non-empty bounded above subset of $G$ has a supremum. $G$ is {\bf Archimedean} if $nx\leq y$ for each $n\in \Bbb N$ implies that $x\leq 0$. One may verify easily that every Dedekind complete $\ell$-group is Archimedean. In this note, all groups are considered to be abelian. A set $S\subseteq G$ is called {\bf solid} if $x\in G$, $y\in S$ and $|x|\leq |y|$ imply that $x\in S$.

Note that by a {\bf topological lattice group}, we mean a topological group which is simultaneously a lattice whose lattice operations are also continuous with respect to the assumed topology.

Suppose $G$ is a topological $\ell$-group. A net $(x_{\alpha})\subseteq G$ is said to be {\bf order} convergent to $x\in G$ if there exists a net $(z_{\beta})$ ( possibly over a different index set) such that $z_{\beta}\downarrow 0$ and for every $\beta$, there is an $\alpha_0$ with $|x_{\alpha}-x|\leq z_{\beta}$ for each $\alpha\ge \alpha_0$. A set $A\subseteq G$ is called {\bf order closed} if it contains limits of all order convergent nets which lie in $A$.

 Keep in mind that topology $\tau$ on a topological $\ell$-group $(G,\tau)$ is referred to as {\bf Fatou} if it has a local basis at the identity consists of solid order closed neighborhoods.

For undefined expressions and the related topics, see \cite{H}.

Now, we recall some terminology we need in the sequel ( see \cite{KZ} for further notifications about these facts).

\begin{definition}\rm
Let $G$ and $H$ be topological groups. A homomorphism $T:G \to
H$ is said to be
\begin{itemize}
\item[$(1)$] \emph{{\sf nb}-bounded} if there exists a
neighborhood $U$ of $e_G$ such that $T(U)$ is bounded in $H$;

\item[$(2)$] \emph{{\sf bb}-bounded} if for every bounded set $B
\subseteq G$, $T(B)$ is bounded in $H$.
\end{itemize}
\end{definition}

The set of all {\sf nb}-bounded ({\sf bb}-bounded) homomorphisms
from a topological group $G$ to a topological group $H$ is denoted
by ${\sf Hom_{nb}}(G,H)$ (${\sf Hom_{bb}}(G,H)$). We write ${\sf
Hom}(G)$ instead of ${\sf Hom}(G,G)$. Here, we emphasize the group operation in  ${\sf Hom}(G,H)$ is pointwise, that is $(T+S)(x):=T(x)+S(x)$.

\smallskip
Now, assume $G$ is a topological group. The class of all ${\sf
nb}$-bounded homomorphisms on $G$ equipped with the topology of
uniform convergence on some neighborhood of $e_G$ is denoted by
${\sf Hom_{nb}}(G)$. Observe that a net $(S_{\alpha})$ of ${\sf
nb}$-bounded homomorphisms converges uniformly on a neighborhood $U$
of $e_G$ to a homomorphism $S$  if for each neighborhood $V$ of
$e_G$ there exists an $\alpha_0$ such that for each
$\alpha\geq\alpha_0$, $(S_{\alpha}-S)(U)\subseteq V$.

\smallskip
The class of all ${\sf bb}$-bounded homomorphisms on $G$ endowed
with the topology of uniform convergence on bounded sets is denoted
by ${\sf Hom_{bb}}(G)$. Note that a net $(S_{\alpha})$ of ${\sf
bb}$-bounded homomorphisms uniformly converges to a homomorphism $S$
on a bounded set $B\subseteq G$ if for each neighborhood $V$ of $e_G$
there is an $\alpha_0$ with $(S_{\alpha}-S)(B) \subseteq V$ for
each $\alpha\ge \alpha_0$.

\smallskip
The class of all continuous homomorphisms on $G$ equipped with the
topology of ${\sf c}$-convergence is denoted by ${\sf Hom_{c}}(G)$.
A net $(S_{\alpha})$ of continuous homomorphisms ${\sf c}$-converges
to a homomorphism $S$ if for each neighborhood $W$ of $e_G$, there
is a neighborhood $U$ of $e_G$ such that for every neighborhood $V$
of $e_G$ there exists an $\alpha_0$ with
$(S_{\alpha}-S)(U)\subseteq V+W$ for each $\alpha\geq\alpha_0$.

\smallskip
Note that ${\sf
Hom_{nb}}(G)$, ${\sf Hom_{c}}(G)$, and ${\sf Hom_{bb}}(G)$ form subgroups
of the group of all homomorphisms on $G$.

\section{topological lattice groups}
\begin{rem}
As opposed to topological vector spaces, in topological groups, not every singleton is bounded. In fact, scalar multiplication is a fruitful tool in this direction that we lack in topological groups; suppose $G$ is an abelian topological group and put $H=G\times {\Bbb Z}_2$. Then, $H$ is a topological group which contains unbounded singletons.  Nevertheless, in some cases such as many classical topological groups or connected topological groups, we do have this mild property. In this paper, we always assume that all topological groups have this mild property.
\end{rem}
\begin{exam}
Consider the additive group $\Bbb Z$ of integer numbers. It can be seen easily that with discrete topology, it is a locally solid topological group. Furthermore, it can be verified that it possesses Fatou property. But it is not a Riesz space, certainly.
\end{exam}
Recall that a homomorphism $T:G\to H$ is said to be order bounded if it maps order bounded sets into order bounded ones. The set of all order bounded homomorphisms from $G$ into $H$ is denoted by $\sf{Hom^{b}(G,H)}$. One may justify that under group operations of homomorphisms defined in \cite{KZ} and invoking \cite[Theorem 4.9]{H}, $\sf{Hom^{b}(G,H)}$ is a group.

\begin{lem}
Suppose $G$ is a Dedekind complete locally solid $\ell$-group with Fatou topology and ${\sf Hom^{b}_{n}}(G)$ is the group of all order bounded $nb$-bounded homomorphisms.
Then ${\sf Hom^{b}_{n}}(G)$ is an $\ell$-group.
\end{lem}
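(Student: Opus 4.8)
The plan is to realize ${\sf Hom^{b}_{n}}(G)$ as a sublattice of the ambient $\ell$-group ${\sf Hom^{b}}(G)$ of all order bounded homomorphisms and then to check that it is stable under the lattice operations. First I would invoke the version of the Riesz--Kantorovich formulae from \cite{Z2}: since $G$ is Dedekind complete, ${\sf Hom^{b}}(G)$ is a Dedekind complete $\ell$-group in which, for a homomorphism $T$ and $x\ge e_G$, one has $T^{+}(x)=\sup\{T(y):e_G\le y\le x\}$, $T^{-}(x)=\sup\{-T(y):e_G\le y\le x\}$ and $|T|(x)=\sup\{T(y):|y|\le x\}$. As already noted in the excerpt, ${\sf Hom^{b}_{n}}(G)$ is a subgroup of ${\sf Hom^{b}}(G)$; hence it is an $\ell$-group as soon as it is closed under the modulus $T\mapsto |T|$, because then for $T,S\in{\sf Hom^{b}_{n}}(G)$ the elements $T\vee S=S+(T-S)^{+}$ and $T\wedge S$ are recovered inside it once $(T-S)^{+}$ is known to lie there, and the squeeze $e_G\le (T-S)^{+}\le|T-S|$ confines $(T-S)^{+}$ to the (bounded) solid hull of the image of $|T-S|$. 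Note also that $|T|$ is automatically order bounded, being a member of ${\sf Hom^{b}}(G)$; the whole content of the lemma is therefore that $|T|$ is again $nb$-bounded whenever $T$ is.

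So the decisive step is: given $T\in{\sf Hom^{b}_{n}}(G)$, produce a neighbourhood of $e_G$ on which $|T|$ has bounded image. Using that $T$ is $nb$-bounded together with the Fatou property, I would fix a \emph{solid, order closed} neighbourhood $U$ of $e_G$ with $T(U)$ bounded, and claim $|T|(U)$ is bounded. For $x\in U$ with $x\ge e_G$, solidity of $U$ forces $\{y:|y|\le x\}\subseteq U$, so every $T(y)$ occurring in $|T|(x)=\sup\{T(y):|y|\le x\}$ lies in the bounded set $T(U)$; moreover $\{T(y):|y|\le x\}$ is order bounded (as $T$ is order bounded and $[-x,x]$ is order bounded), so by Dedekind completeness the supremum exists and is the order limit of the upward directed net of finite suprema $\bigvee_{i}T(y_{i})$. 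Combining the boundedness of $T(U)$, the Riesz decomposition property of $\ell$-groups (to keep control of these finite suprema relative to the solid neighbourhoods of $e_G$), and the order closedness of those neighbourhoods, one shows $|T|(x)$ lands in a bounded set that does not depend on $x$. Finally, writing an arbitrary $x\in U$ as $x=x^{+}-x^{-}$ with $x^{+},x^{-}\in U$ (solidity) and using that $|T|$ is a homomorphism extends the estimate from the positive part of $U$ to all of $U$; hence $|T|$ is $nb$-bounded.

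The main obstacle is exactly this last claim — that the Riesz--Kantorovich supremum preserves $nb$-boundedness — and it is where Dedekind completeness and the Fatou property are both genuinely needed. In the Banach-lattice setting the analogous statement is almost free, since a regular operator is automatically norm bounded; here there is no scalar multiplication and the topology need not be normable, so one must extract the \emph{topological} boundedness of $|T|(x)$ from the topological boundedness of $T(U)$, the purely order-theoretic order-boundedness of $T$ (which is what makes the supremum exist), and the Fatou property, the latter serving as the bridge that lets the order limit defining $|T|(x)$ inherit membership in the solid order closed neighbourhoods that bound $T(U)$. As the introduction warns, it is this interplay — not the formal sublattice bookkeeping — that has to be verified with care, and I would expect the proof to spend most of its effort there.
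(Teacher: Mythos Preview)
Your approach is essentially the paper's: both reduce the lemma to showing that the Riesz--Kantorovich supremum preserves $nb$-boundedness, using solidity of the domain neighbourhood $U$ (so that $0\le u\le x\in U_{+}$ forces $u\in U$, hence $T(u)\in T(U)$) together with the Fatou property on the target side so that the supremum stays in the order closed neighbourhood. The paper works directly with $T^{+}$ rather than routing through $|T|$, and is much terser --- it does not invoke Riesz decomposition or the net of finite suprema, simply noting $T(u)\in nV$ for all admissible $u$ and concluding $T^{+}(x)\in nV$ by order closedness of $V$ --- but the core idea is the same.
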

\begin{proof}
We need  to prove that for a homomorphism $T\in {\sf Hom^{b}_{n}}(G)$, $T^{+} \in {\sf Hom^{b}_{n}}(G)$. By \cite[Theorem 1]{Z2}, we have
\[T^{+}(x)=\sup \{T(u):  0\leq u\leq x\}.\]
Choose a neighborhood $U\subseteq G$ of the identity such that $T(U)$ is bounded. So, for arbitrary neighborhood $V$, there is $n\in \Bbb N$ with $T(U)\subseteq n V$. Therefore, for each $x\in U_{+}$, $T(x)\in n V$, so that $T^{+}(x)\in n V$ using solidness of $U$ and order closedness of $V$. Thus, we see that $T^{+}(U)$ is also bounded.

\end{proof}
\begin{thm}\label{600}
Suppose $G$ is a Dedekind complete locally solid $\ell$-group with Fatou topology. Then ${\sf Hom^{b}_{n}}(G)$ is locally solid with respect to the uniform convergence topology on some neighborhood at the identity.
\end{thm}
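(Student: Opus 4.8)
The plan is to produce a base of solid neighbourhoods of the identity $0$ (the zero homomorphism) of ${\sf Hom^{b}_{n}}(G)$ for the given topology. Since ${\sf Hom^{b}_{n}}(G)$ is already an $\ell$-group by the previous Lemma and a topological group by \cite{KZ}, such a base is precisely what is needed: a group topology with a base of solid neighbourhoods at $0$ automatically makes the lattice operations continuous, via the Birkhoff inequalities $|S^{+}-T^{+}|\le|S-T|$ and $\bigl||S|-|T|\bigr|\le|S-T|$. Recall that a basic neighbourhood of $0$ for the uniform convergence topology is built from the sets $M(U,V):=\{T\in{\sf Hom^{b}_{n}}(G):T(U)\subseteq V\}$ with $U,V$ neighbourhoods of $e_{G}$; as $G$ is locally solid with Fatou topology, the solid order closed neighbourhoods of $e_{G}$ are cofinal, so it suffices to consider $M(U,V)$ in which both $U$ and $V$ are solid and order closed.

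The main step is to show that such an $M(U,V)$ is a solid subset of ${\sf Hom^{b}_{n}}(G)$. Let $T\in M(U,V)$ and $S\in{\sf Hom^{b}_{n}}(G)$ with $|S|\le|T|$, the absolute values being computed in ${\sf Hom^{b}_{n}}(G)$, and fix $x\in U$. Using $x=x^{+}-x^{-}$, the estimate $|S(y)|\le|S|(y)$ for $y\ge0$, and the triangle inequality in $G$, one obtains $|S(x)|\le|S|(|x|)\le|T|(|x|)$. By \cite[Theorem 1]{Z2} the positive part $T^{+}$ obeys the Riesz--Kantorovich formula, hence so does $|T|=T^{+}+(-T)^{+}$, and a short computation yields $|T|(|x|)=\sup\{T(w):|w|\le|x|\}$. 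Because $U$ is solid and $x\in U$, every $w$ with $|w|\le|x|$ lies in $U$, so all these $T(w)$ lie in $T(U)\subseteq V$; and since $V$ is solid and order closed, the supremum $|T|(|x|)$ again lies in $V$ --- this is exactly the mechanism used in the proof of the Lemma, where the supremum is reached as an order limit of elements built from values lying in $V$. Finally $|S(x)|\le|T|(|x|)\in V$ together with solidity of $V$ gives $S(x)\in V$; as $x\in U$ was arbitrary, $S\in M(U,V)$. Since a general basic neighbourhood of $0$ is a union of sets of the form $M(U,V(U))$ and a union of solid sets is solid, this furnishes the required base, and hence local solidity.

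I expect the work to concentrate on two points. First, one must make the ``uniform convergence on some neighbourhood'' topology precise enough to confirm that the solid sets just exhibited really form a base of neighbourhoods of $0$ --- concretely, that the unions $\bigcup_{U}M(U,V(U))$ over choice functions $U\mapsto V(U)$ are neighbourhoods of $0$ and are downward cofinal among them; this is routine bookkeeping once the cofinality of the solid order closed $U$'s is in hand. Second, and more substantively, the claim ``$|T|(|x|)\in V$'' has to be argued with care: a solid set need not be stable under the finite joins that occur when one realises the Riesz--Kantorovich supremum as an increasing net, so one must lean on the order closedness of $V$, exactly as in the Lemma. As always in this framework no scalar multiplication is available, so where a Riesz space argument would rescale by $1/n$ one must instead use the group operations together with the Riesz decomposition property of $\ell$-groups --- the recurring caveat of the introduction. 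Apart from these points, the argument parallels the treatment of order bounded operators on locally solid Riesz spaces.
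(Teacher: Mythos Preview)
Your proposal is correct, and its technical core --- using the Riesz--Kantorovich formula together with solidness of $U$ and order closedness of $V$ to push $|T|(|x|)$ (or $(T_\alpha-S_\alpha)^{+}(x)$) back into $V$ --- is exactly the mechanism the paper uses.  The framing, however, is different.  The paper does \emph{not} exhibit a base of solid neighbourhoods; instead it verifies directly that the lattice operation $T\mapsto T^{+}$ is uniformly continuous for the given topology (via the inequality $T_\alpha^{+}(x)-S_\alpha^{+}(x)\le (T_\alpha-S_\alpha)^{+}(x)$ and the Fatou step) and then invokes \cite[Theorem 4.1]{H}, which characterises local solidity through uniform continuity of the lattice operations.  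Your route goes through the dual characterisation --- a base of solid neighbourhoods --- and therefore forces you to identify the basic $0$-neighbourhoods of the ``uniform convergence on \emph{some} neighbourhood'' topology as unions $\bigcup_{U}M(U,V(U))$, a piece of bookkeeping the paper sidesteps entirely.  What you gain is an explicit solid base; what the paper gains is that it never has to describe the neighbourhood filter and can work purely at the level of convergent nets, which is cleaner for this somewhat unusual inductive-type topology.  Your second flagged difficulty (why a supremum of elements of a solid order closed $V$ remains in $V$) is present verbatim in the paper's argument as well, so you are not introducing any new gap.
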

\begin{proof}
Let $T\in {\sf Hom^{b}_{n}}(G)$ and $x\in G_{+}$. By \cite[Theorem 1]{Z2}, we have
 \[T^{+}(x)=\sup\{T(u):  0\leq u\leq x\}.\]
   Now, suppose $(T_{\alpha})$ and $(S_{\alpha})$ are  nets of order bounded $nb$-bounded homomorphisms that $(T_{\alpha}-S_{\alpha})$ converges uniformly on some  neighborhood $U\subseteq G$ to zero. Choose arbitrary neighborhood $W\subseteq G$. Fix $x\in U_{+}$. Now, observe the following lattice inequality:
 \[\sup\{T_{\alpha}(u): 0\leq u\leq x\}-\sup\{S_{\alpha}(u): 0\leq u\leq x\}\]
 \[\le\sup\{(T_{\alpha}-S_{\alpha})(u): 0\leq u\leq x\}.\]
 There exists an $\alpha_0$ such that $(T_{\alpha}-S_{\alpha})(U)\subseteq W$ for each $\alpha\geq\alpha_0$. Therefore, using the order closedness of neighborhood $W$ and solidness of neighborhood $U$, we have
\[{T_{\alpha}}^{+}(x)-{S_{\alpha}}^{+}(x)\leq({T_{\alpha}-S_{\alpha}})^{+}(x)\in W.\]


Now, by considering \cite[Theorem 4.1]{H}, the proof would be complete.
\end{proof}
\begin{lem}
Suppose $G$ is a Dedekind complete locally solid $\ell$-group with Fatou topology and ${\sf Hom^{b}_{c}}(G)$ is the group of all order bounded continuous homomorphisms.
Then ${\sf Hom^{b}_{c}}(G)$ is an $\ell$-group.
\end{lem}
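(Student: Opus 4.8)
The plan is to run the same argument as in the preceding lemma (the $nb$-case), with ``boundedness of the image of a neighbourhood'' replaced by ``continuity''. Note first that ${\sf Hom^{b}_{c}}(G)$ is a subgroup of the ordered group of all order bounded homomorphisms on $G$, equipped with the pointwise order, which is translation invariant; so to see that it is an $\ell$-group it suffices to check that it is a sublattice, and since $T\vee S = S + (T-S)^{+}$ this reduces to showing that $T\in{\sf Hom^{b}_{c}}(G)$ implies $T^{+}\in{\sf Hom^{b}_{c}}(G)$. By \cite[Theorem 1]{Z2}, since $G$ is Dedekind complete, the supremum $T^{+}$ of $T$ and $0$ exists among order bounded homomorphisms, it is again an order bounded homomorphism, and it is given by
\[T^{+}(x)=\sup\{T(u): 0\le u\le x\}\qquad(x\in G_{+}).\]
Hence the only thing left to verify is that $T^{+}$ is continuous.

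Since $T^{+}$ is a homomorphism, continuity is equivalent to continuity at $e_{G}$. Fix a neighbourhood $V$ of the identity of $G$; using the Fatou hypothesis we may assume $V$ is solid and order closed. By continuity of $T$ together with local solidness of $G$, choose a solid neighbourhood $U$ of $e_{G}$ with $T(U)\subseteq V$. For $x\in U_{+}$ and any $u$ with $0\le u\le x$ we have $u\in U$ by solidness, so $\{T(u): 0\le u\le x\}\subseteq V$; moreover this set is upward directed, since $T(u_{1})\vee T(u_{2})\le T(u_{1}\vee u_{2})$ and $u_{1}\vee u_{2}\in[0,x]$. Consequently, indexing this set by itself, we obtain an increasing net with supremum $T^{+}(x)$, hence a net order converging to $T^{+}(x)$, and the order closedness of $V$ forces $T^{+}(x)\in V$. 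For an arbitrary $x\in U$ we then note $|x|\in U_{+}$, so $T^{+}(|x|)\in V$, and since $T^{+}$ is a positive homomorphism we have $|T^{+}(x)|\le T^{+}(|x|)$; solidness of $V$ now yields $T^{+}(x)\in V$. Thus $T^{+}(U)\subseteq V$, so $T^{+}$ is continuous, and the proof is complete.

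The one genuinely non-formal point is the passage from ``$\{T(u): 0\le u\le x\}\subseteq V$'' to ``$T^{+}(x)\in V$'', which is precisely where Dedekind completeness is used (to realise the pointwise supremum as an honest order limit of an increasing net) and where the Fatou property is used (order closedness of $V$); everything else is bookkeeping with solid neighbourhoods. It is worth stressing, as the paper emphasises in the introduction, that no device depending on scalar multiplication is invoked here — only the group and order structure, together with the special solid/order-closed form of $U$ and $V$.
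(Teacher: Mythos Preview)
Your argument follows exactly the paper's route: reduce to showing $T^{+}$ is continuous, choose a solid order-closed neighbourhood $V$, use continuity of $T$ to find a solid $U$ with $T(U)\subseteq V$, and deduce $T^{+}(U)\subseteq V$. The paper's own proof is a three-line sketch of precisely this; your version additionally spells out the passage from arbitrary $x\in U$ to $|x|\in U_{+}$ via $|T^{+}(x)|\le T^{+}(|x|)$, which the paper leaves implicit.

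There is, however, one incorrect sentence. You claim that $\{T(u):0\le u\le x\}$ is upward directed ``since $T(u_{1})\vee T(u_{2})\le T(u_{1}\vee u_{2})$''. That inequality is valid only for \emph{positive} $T$; for a general order bounded homomorphism it fails. For instance, on $G=\mathbb{Z}^{2}$ take $T(a,b)=(a-b,\,b-a)$, $u_{1}=(1,0)$, $u_{2}=(0,1)$: then $T(u_{1})\vee T(u_{2})=(1,1)$ while $T(u_{1}\vee u_{2})=T(1,1)=(0,0)$, and in fact no $u_{3}\in[0,(1,1)]$ satisfies $T(u_{3})\ge(1,1)$, so the set is genuinely not directed. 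Thus your justification for ``$T^{+}(x)\in V$'' does not stand as written. The paper itself offers no argument at this point either --- it simply asserts ``$T^{+}(x)\in V$ using solidness of $U$ and order closedness of $V$'' --- so at the level of detail the paper provides, your proof is equivalent; but the extra reasoning you inserted is erroneous and should be deleted or repaired.
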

\begin{proof}
We need  to prove that for a homomorphism $T\in {\sf Hom^{b}_{c}}(G)$, $T^{+} \in {\sf Hom^{b}_{c}}(G)$. By \cite[Theorem 1]{Z2}, we have
\[T^{+}(x)=\sup \{T(u):  0\leq u\leq x\}.\]
Suppose $W\subseteq G$ is an arbitrary order closed neighborhood at the identity. There exists  a solid neighborhood $U$ with  $T(U)\subseteq V$. Therefore, for each $x\in U_{+}$, $T(x)\in  V$, so that $T^{+}(x)\in  V$ using solidness of $U$ and order closedness of $V$. Thus, we see that $T^{+}(U)\subseteq V$.

\end{proof}
\begin{thm}\label{600}
Suppose $G$ is a Dedekind complete locally solid $\ell$-group with Fatou topology. Then ${\sf Hom^{b}_{c}}(G)$ is locally solid with respect to the $c$-convergence topology.
\end{thm}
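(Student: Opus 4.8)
The plan is to transcribe the proof of Theorem~\ref{600} (the one for ${\sf Hom^{b}_{n}}(G)$) to the present setting, replacing uniform convergence on a neighborhood by $c$-convergence throughout. By the preceding lemma, ${\sf Hom^{b}_{c}}(G)$ is an $\ell$-group and $T^{+}$ stays order bounded and continuous, so the only point in question is the existence of a local base of solid sets at the identity; for this I would use the criterion of \cite[Theorem~4.1]{H} exactly as before, i.e.\ it is enough to show that if $(T_{\alpha})$ and $(S_{\alpha})$ are nets in ${\sf Hom^{b}_{c}}(G)$ with $(T_{\alpha}-S_{\alpha})$ $c$-convergent to $0$, then $({T_{\alpha}}^{+}-{S_{\alpha}}^{+})$ is $c$-convergent to $0$.

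The engine is again the Riesz--Kantorovich formula \cite[Theorem~1]{Z2}: for $R\in{\sf Hom^{b}_{c}}(G)$ and $x\in G_{+}$, $R^{+}(x)=\sup\{R(u):0\le u\le x\}$. From it one obtains, verbatim as in Theorem~\ref{600}, the inequality ${T_{\alpha}}^{+}(x)-{S_{\alpha}}^{+}(x)\le(T_{\alpha}-S_{\alpha})^{+}(x)$ for $x\in G_{+}$ and, interchanging the roles of the two nets, its mirror image; so it suffices to treat $(T_{\alpha}-S_{\alpha})^{+}$. Now unwind $c$-convergence. Given an order closed neighborhood $W$ of $e_{G}$, use continuity of addition together with the Fatou base to pick a solid order closed $W_{1}$ with $W_{1}+W_{1}\subseteq W$; let $U$ be the neighborhood supplied by the $c$-convergence $T_{\alpha}-S_{\alpha}\to 0$ applied to $W_{1}$, which we may take solid; and, for an arbitrary order closed neighborhood $V$, take $\alpha_{0}$ with $(T_{\alpha}-S_{\alpha})(U)\subseteq V+W_{1}$ for all $\alpha\ge\alpha_{0}$. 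Then for $x\in U_{+}$ and $\alpha\ge\alpha_{0}$ every $u$ with $0\le u\le x$ lies in $U$ by solidness, hence $(T_{\alpha}-S_{\alpha})(u)\in V+W_{1}$, and passing to the supremum (as in the preceding lemmas, via order closedness) keeps $(T_{\alpha}-S_{\alpha})^{+}(x)$ inside $V+W$. Reading this back through the definition of the $c$-convergence topology on ${\sf Hom_{c}}(G)$ gives $(T_{\alpha}-S_{\alpha})^{+}\to 0$, and \cite[Theorem~4.1]{H} closes the argument.

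The step I expect to cause the most trouble is precisely this last one, the passage from $(T_{\alpha}-S_{\alpha})(U)\subseteq V+W_{1}$ to $(T_{\alpha}-S_{\alpha})^{+}(x)\in V+W$. In Theorem~\ref{600} the relevant right-hand side was a single order closed neighborhood and the Riesz--Kantorovich supremum landed back inside it for free; here the more delicate quantifier structure of $c$-convergence forces the extra slack term ``$+W$'', a set which need not be order closed, so one has to split $W$ beforehand, carry both summands through the supremum, and make sure that the neighborhood $U$ extracted from the outermost quantifier of $c$-convergence can indeed be chosen solid. Everything else is a routine rerun of the ${\sf Hom^{b}_{n}}(G)$ argument.
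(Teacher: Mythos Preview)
Your approach is essentially the paper's own: reduce to uniform continuity of $T\mapsto T^{+}$ via \cite[Theorem~4.1]{H}, invoke the Riesz--Kantorovich formula, use the inequality $T_{\alpha}^{+}(x)-S_{\alpha}^{+}(x)\le (T_{\alpha}-S_{\alpha})^{+}(x)$, and push the supremum into the target neighborhood by solidness of $U$ and order closedness on the right. The paper does not perform your preliminary splitting $W_{1}+W_{1}\subseteq W$; it simply takes $W$ and $V$ from the Fatou base and asserts, ``by considering the order closedness of neighborhoods $V$ and $W$,'' that $(T_{\alpha}-S_{\alpha})^{+}(x)\in V+W$ directly. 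So the point you flag as ``the step I expect to cause the most trouble'' is exactly where your write-up is more scrupulous than the paper's: the paper treats the passage of the supremum into $V+W$ as immediate from the order closedness of the two summands, without introducing any slack. Apart from this refinement, your argument and the paper's coincide line for line.
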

\begin{proof}
Let $T\in \sf{Hom^{b}_{c}}(G)$ and $x\in G_{+}$. By \cite[Theorem 1]{Z2}, we have
 \[T^{+}(x)=\sup\{T(u):  0\leq u\leq x\}.\]
  Suppose $(T_{\alpha})$ and $(S_{\alpha})$ are  nets of order bounded continuous homomorphisms that $(T_{\alpha}-S_{\alpha})$ $c$-converges to zero in ${\sf Hom^{b}_{c}}(X)$. Choose arbitrary neighborhood $W\subseteq G$. There is a neighborhood $U$ such that for every neighborhood $V$ there exists an $\alpha_0$ with $(T_{\alpha}-S_{\alpha})(U)\subseteq V+W$ for each $\alpha\geq\alpha_0$. Fix $x\in U_{+}$. Now, observe the following lattice inequality:
 \[\sup\{T_{\alpha}(u): 0\leq u\leq x\}-\sup\{S_{\alpha}(u): 0\leq u\leq x\}\]
 \[\le\sup\{(T_{\alpha}-S_{\alpha})(u): 0\leq u\leq x\}.\]
 Therefore, by considering the order closedness of neighborhoods $V$ and $W$ and also solidness of neighborhood $U$, we have
\[{T_{\alpha}}^{+}(x)-{S_{\alpha}}^{+}(x)\leq({T_{\alpha}-S_{\alpha}})^{+}(x)\in V+W.\]

Now, using \cite[Theorem 4.1]{H}, yields the desired result.


\end{proof}
\begin{lem}\label{700}
Suppose $G$ is a Dedekind complete locally solid $\ell$-group with Fatou topology and ${\sf Hom^{b}_{b}}(X)$ is the group of all order bounded $bb$-bounded homomorphisms.
Then ${\sf Hom^{b}_{b}}(G)$ is an $\ell$-group.
\end{lem}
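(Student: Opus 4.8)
The plan is to mirror the proofs of Lemmas 1 and 3. Since $G$ is Dedekind complete, \cite[Theorem 1]{Z2} guarantees that ${\sf Hom^{b}}(G)$ is itself an $\ell$-group, the positive part being given by the Riesz--Kantorovich formula
\[ T^{+}(x)=\sup\{T(u):0\le u\le x\}\qquad(x\in G_{+}). \]
As ${\sf Hom^{b}_{b}}(G)$ is already known to be a subgroup of ${\sf Hom^{b}}(G)$, it suffices to show it is a sublattice, and for that it is enough to check that $T\in{\sf Hom^{b}_{b}}(G)$ implies $T^{+}\in{\sf Hom^{b}_{b}}(G)$, i.e.\ that $T^{+}$ is $bb$-bounded.

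First I would record the group-theoretic input that replaces the ``neighborhood'' datum of Lemmas 1 and 3: in a locally solid $\ell$-group the solid hull of a bounded set is bounded. Indeed, given a bounded $B$ and a solid neighborhood $U$ of the identity with $B\subseteq nU$, the Riesz decomposition property of $\ell$-groups lets one write any $x\in B$ as $x=x_{1}+\dots+x_{n}$ with $x_{i}\in U$, and, for $|y|\le|x|$, split $y=y_{1}+\dots+y_{n}$ with $|y_{i}|\le|x_{i}|$; solidness of $U$ then gives $y_{i}\in U$, so $y\in nU$. Hence $\mathrm{sol}(B)\subseteq nU$ is bounded, and since $T$ is $bb$-bounded, $T(\mathrm{sol}(B))$ is bounded.

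Now fix a bounded $B\subseteq G$. Replacing $B$ by $\mathrm{sol}(B)$ and using $T^{+}(b)=T^{+}(b^{+})-T^{+}(b^{-})$ with $b^{\pm}\in\mathrm{sol}(B)$, it is enough to bound $T^{+}$ on $B_{+}$. Let $V$ be an arbitrary solid, order closed neighborhood of the identity (such sets form a local base, by the Fatou property) and choose $n\in\nn$ with $T(B)\subseteq nV$. For $x\in B_{+}$ and any $u$ with $0\le u\le x$ we have $u\in\mathrm{sol}(B)=B$, hence $T(u)\in nV$; so $\{T(u):0\le u\le x\}\subseteq nV$, and since $T^{+}(x)$ is its supremum --- reached as the order limit of the increasing net of finite suprema of that set --- solidness of $B$ together with order closedness of $V$ forces $T^{+}(x)\in nV$, exactly as in Lemmas 1 and 3. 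Thus $T^{+}(B_{+})\subseteq nV$, whence $T^{+}(B)\subseteq nV-nV$, so $T^{+}(B)$ is bounded. Therefore $T^{+}\in{\sf Hom^{b}_{b}}(G)$, so ${\sf Hom^{b}_{b}}(G)$ is an $\ell$-subgroup of ${\sf Hom^{b}}(G)$ and hence an $\ell$-group; no verification is needed beyond what already appears in the preceding lemmas.

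The only step I expect to be a genuine obstacle is the passage ``$\{T(u):0\le u\le x\}\subseteq nV\ \Rightarrow\ T^{+}(x)\in nV$'': it is here that all three hypotheses enter --- Dedekind completeness to know that the supremum exists and is an order limit of finite suprema, the Fatou property to have a solid order closed local base so that this order limit cannot escape $nV$, and local solidity (through the boundedness of solid hulls) to control all the $u$ with $0\le u\le x$. As emphasized in the introduction, this is precisely the point that must be argued with the group and order structure alone, since the Riesz-space shortcut via scalar multiplication is unavailable; everything else is a transcription of the arguments for ${\sf Hom^{b}_{n}}(G)$ and ${\sf Hom^{b}_{c}}(G)$ with ``neighborhood of $e_{G}$'' replaced by ``bounded set''.
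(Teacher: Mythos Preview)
Your proposal is correct and follows essentially the same approach as the paper: reduce to showing $T^{+}$ is $bb$-bounded, replace $B$ by its solid hull, and then use solidness of $B$ together with order closedness of the Fatou neighborhood $V$ to pass from $\{T(u):0\le u\le x\}\subseteq nV$ to $T^{+}(x)\in nV$. You supply two details the paper only asserts---why the solid hull of a bounded set is bounded (via Riesz decomposition), and the explicit reduction to $B_{+}$ using $T^{+}(b)=T^{+}(b^{+})-T^{+}(b^{-})$, ending with $T^{+}(B)\subseteq nV-nV$ rather than $nV$---but the core argument is identical.
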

\begin{proof}
It suffices to prove that for a homomorphism  $T\in {\sf Hom^{b}_{b}}(G)$, $T^{+} \in {\sf Hom^{b}_{b}}(G)$. By \cite[Theorem 1]{Z2}, we have
\[T^{+}(x)=\sup \{T(u):  0\leq u\leq x\}.\]
Suppose $V\subseteq G$ is an arbitrary neighborhood at the identity. Fix a bounded set $B\subseteq G$. Without loss of generality, we may assume $B$ is solid, otherwise, consider the solid hull of $B$ which is certainly bounded. There exists a positive integer $n$ with $T(B)\subseteq nV$. Therefore, for each $x\in B_{+}$, $T(x)\in  V$, so that $T^{+}(x)\in  V$ using solidness of $B$ and order closedness of $V$. Thus, we see that $T^{+}(B)\subseteq nV$.
\end{proof}
\begin{thm}\label{600}
Suppose $G$ is a Dedekind complete locally solid $\ell$-group with  Fatou topology. Then the lattice operations in ${\sf Hom^{b}_{b}}(G)$ are uniformly continuous with respect to the uniform convergence topology on bounded sets.
\end{thm}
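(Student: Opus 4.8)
The plan is to mirror the template of the three preceding theorems, the only genuinely new point being that the topology of uniform convergence on bounded sets forces us to argue with an arbitrary bounded set in place of a fixed neighborhood. First I would reduce the statement about all three lattice operations to a single one. Recalling the $\ell$-group identities $T\vee S=S+(T-S)^{+}$, $T\wedge S=T-(T-S)^{+}$ and $|T|=T^{+}+(-T)^{+}$, together with the fact that addition and inversion are uniformly continuous in any topological group, it suffices to prove that the map $T\mapsto T^{+}$ is uniformly continuous on ${\sf Hom^{b}_{b}}(G)$.

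Next I would set up the core estimate. Fix a bounded set $B\subseteq G$ and, using the Fatou property, a solid order closed neighborhood $W$ of the identity of $G$; since a solid set is automatically symmetric, $W-W=W+W$. We may assume $B$ is solid, replacing it if necessary by its solid hull, which is again bounded because $G$ is locally solid. The claim to establish is that whenever $T,S\in{\sf Hom^{b}_{b}}(G)$ satisfy $(T-S)(B)\subseteq W$, one has $(T^{+}-S^{+})(B)\subseteq W+W$. Given $x\in B_{+}$, \cite[Theorem 1]{Z2} yields $T^{+}(x)=\sup\{T(u):0\le u\le x\}$ and $(T-S)^{+}(x)=\sup\{(T-S)(u):0\le u\le x\}$, and the elementary lattice inequality used in the previous proofs gives
\[
T^{+}(x)-S^{+}(x)\le\sup\{(T-S)(u):0\le u\le x\}=(T-S)^{+}(x).
\]
For each $u$ with $0\le u\le x$ we have $|u|\le|x|$ with $x\in B$, so $u\in B$ by solidness, hence $(T-S)(u)\in W$; order closedness of $W$ then forces the supremum $(T-S)^{+}(x)$, being an order limit of points of $W$, to lie in $W$, so that $T^{+}(x)-S^{+}(x)\in W$. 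Running the same argument with the roles of $T$ and $S$ interchanged gives $S^{+}(x)-T^{+}(x)\in W$ as well, so $T^{+}(x)-S^{+}(x)\in W$ for every $x\in B_{+}$.

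Finally I would pass from $B_{+}$ to $B$ and conclude. For arbitrary $x\in B$ the solidness of $B$ gives $x^{+},x^{-}\in B_{+}$ (since $0\le x^{+}\le|x|\in B$, and likewise for $x^{-}$), and since $T^{+}-S^{+}$ is a homomorphism — both $T^{+}$ and $S^{+}$ being homomorphisms by Lemma \ref{700} — we get $(T^{+}-S^{+})(x)=(T^{+}-S^{+})(x^{+})-(T^{+}-S^{+})(x^{-})\in W-W\subseteq W+W$. This proves the claim, and invoking \cite[Theorem 4.1]{H} as in the earlier theorems then delivers the uniform continuity of $T\mapsto T^{+}$, hence of $\vee$, $\wedge$ and $|\cdot|$. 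I expect the only nontrivial step to be the passage from ``$(T-S)(u)\in W$ for all $0\le u\le x$'' to ``$\sup_{0\le u\le x}(T-S)(u)\in W$'': this is precisely where both hypotheses enter, the solidness of $B$ keeping the witnesses $u$ inside $B$ and the Fatou (order closedness) property of $W$ absorbing the order limit; everything else is routine $\ell$-group bookkeeping with symmetric neighborhoods.
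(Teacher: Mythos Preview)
Your proof is correct and follows essentially the same line as the paper's: fix a solid bounded set $B$ and a solid order closed neighborhood $W$, use the Riesz--Kantorovich formula together with solidness of $B$ to trap every witness $u$ in $B$ and order closedness of $W$ to absorb the supremum $(T-S)^{+}(x)$, then invoke \cite[Theorem 4.1]{H}. You are in fact more explicit than the paper in two places---the reduction of all lattice operations to $T\mapsto T^{+}$ via the identities $T\vee S=S+(T-S)^{+}$ etc., and the passage from $B_{+}$ to $B$ via $x=x^{+}-x^{-}$---and the only quibble is that from the two one-sided bounds you really get $|T^{+}(x)-S^{+}(x)|\le (T-S)^{+}(x)+(S-T)^{+}(x)\in W+W$ rather than $\in W$, which is harmless (and matches the paper's own level of precision).
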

\begin{proof}
Let $T\in \sf{Hom^{b}_{b}}(G)$ and $x\in G_{+}$. By \cite[Theorem 1]{Z2}, we have
 \[T^{+}(x)=\sup\{T(u):  0\leq u\leq x\}.\]
  Suppose $(T_{\alpha})$ and $(S_{\alpha})$ are  nets of order bounded $bb$-bounded homomorphisms that $(T_{\alpha}-S_{\alpha})$ converges uniformly to zero on bounded sets in ${\sf Hom^{b}_{b}}(X)$. Fix a bounded set $B\subseteq G$ which can be chosen solid  as in the proof of Lemma \ref{700}. Choose arbitrary neighborhood $W\subseteq G$. Fix $x\in B_{+}$. Now, observe the following lattice inequality:
 \[\sup\{T_{\alpha}(u): 0\leq u\leq x\}-\sup\{S_{\alpha}(u): 0\leq u\leq x\}\]
 \[\le\sup\{(T_{\alpha}-S_{\alpha})(u): 0\leq u\leq x\}.\]
 There exists an $\alpha_0$ such that $(T_{\alpha}-S_{\alpha})(B)\subseteq W$ for each $\alpha\geq\alpha_0$. Therefore, using the order closedness of neighborhood $W$ and solidness of subset $B$, we have
\[{T_{\alpha}}^{+}(x)-{S_{\alpha}}^{+}(x)\leq({T_{\alpha}-S_{\alpha}})^{+}(x)\in W.\]


Again, \cite[Theorem 4.1]{H} does the job.
This would complete our claim.
\end{proof}
\begin{rem}
As a side note, it can be noticed that if $G$ is a locally solid $\ell$-groups, then  ${\sf
Hom^{b}_{n}}(G)$, ${\sf Hom^{b}_{c}}(G)$, and ${\sf Hom^{b}_{b}}(G)$ are ideals in ${\sf Hom^{b}(G)}$.
\end{rem}
\subsection{unbounded topology}
In this part, we investigate unbounded topology on topological $\ell$-groups.

A net $(x_{\alpha})$ in a topological $\ell$-group $(G,\tau)$ is said to be {\bf unbounded $\tau$-convergent} to $x\in G$ ( in notation, $x_{\alpha}\xrightarrow{u\tau}x$) provided that $|x_{\alpha}-x|\wedge u\xrightarrow{\tau}0$ for each positive $u\in G$.
Note that for order bounded nets, $u\tau$-convergence and $\tau$-convergence agree. However, consider the additive group $c_0$ with topology $\tau$ induced by uniform norm and pointwise ordering; indeed, it is a topological $\ell$-group. Consider the sequence $(e_n)$ consists of the standard basis of $c_0$. Indeed, $e_n\xrightarrow{u\tau}0$ but not  in the $\tau$-topology.

Now, we show that this type of convergence is topological; more precisely, we prove that this kind of convergence on a locally solid $\ell$-group is again locally solid. For locally solid Riesz spaces, it is proved in \cite[Theorem 2.3]{T}.
We recall an elementary lemma which is a version of \cite[Lemma 1.4]{AB} in Riesz spaces.
\begin{lem}
If $x,x_1,x_2$ are positive elements in an $\ell$-group, then $x\wedge(x_1+x_2)\leq x\wedge x_1+x\wedge x_2$.
\end{lem}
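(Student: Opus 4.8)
The plan is to prove the elementary lattice inequality $x\wedge(x_1+x_2)\leq x\wedge x_1 + x\wedge x_2$ for positive elements of an arbitrary $\ell$-group, working purely from the group and lattice axioms. The guiding idea is the standard infimum/supremum calculus in $\ell$-groups: translation is a lattice automorphism, so $a\wedge b + c = (a+c)\wedge(b+c)$, and similarly $a + b = a\vee b + a\wedge b$ (the group being abelian). The whole statement is really a transcription of \cite[Lemma 1.4]{AB} from Riesz spaces to $\ell$-groups, and since that Riesz-space proof uses only order and (additive) group operations — no scalar multiplication — it transfers essentially verbatim; the only thing to be careful about is to invoke the translation-invariance identities rather than any vector-space reasoning.

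Concretely, I would proceed as follows. First record the two working identities valid in any abelian $\ell$-group: for all $a,b,c$, $(a\wedge b)+c = (a+c)\wedge(b+c)$, and $a\wedge b = -((-a)\vee(-b))$, hence also $a+b = (a\vee b)+(a\wedge b)$. Then start from the trivial bound $x\wedge(x_1+x_2)\le x_1+x_2$ together with $x\wedge(x_1+x_2)\le x$. Next, subtract $x\wedge x_2$ from the quantity $x\wedge(x_1+x_2)$ and show the result is $\le x\wedge x_1$; equivalently show $x\wedge(x_1+x_2) \le x\wedge x_1 + x\wedge x_2$ by checking the right-hand side dominates both "generators" of the left-hand meet after the appropriate translation. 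Using the translation identity, $x\wedge x_1 + x\wedge x_2 = \big((x)\wedge(x_1)\big) + x\wedge x_2$, and one expands $\big(x + x\wedge x_2\big)\wedge\big(x_1 + x\wedge x_2\big)$; since $x\wedge x_2 \ge 0$ (here positivity of $x,x_1,x_2$ is used) one gets $x + x\wedge x_2 \ge x$ and $x_1 + x\wedge x_2 \ge x_1 + \,$(something that combines with $x_1$) — more precisely one uses $x_1 + x\wedge x_2 \ge x_1\wedge x + x_2\wedge x \ge \dots$; cleaner: expand once more to $x\wedge x_1 + x\wedge x_2 = (x+x)\wedge(x+x_2)\wedge(x_1+x)\wedge(x_1+x_2)$, and observe each of the four terms is $\ge x\wedge(x_1+x_2)$ because $x\ge x\wedge(x_1+x_2)$, $x_2\ge 0$ so $x+x_2\ge x\ge x\wedge(x_1+x_2)$, similarly $x_1+x\ge x\ge x\wedge(x_1+x_2)$, and $x_1+x_2\ge x\wedge(x_1+x_2)$ trivially. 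Taking the meet of the four terms therefore still dominates $x\wedge(x_1+x_2)$, which is exactly the claim.

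The only genuine subtlety — and it is minor — is bookkeeping with the translation identities so that the "distribute a meet over a sum twice" step $(x\wedge x_1)+(x\wedge x_2) = (x+x)\wedge(x+x_2)\wedge(x_1+x)\wedge(x_1+x_2)$ is fully justified; this is two applications of $(a\wedge b)+c=(a+c)\wedge(b+c)$ (once on the left factor, once on the right) plus commutativity of $\wedge$ and of $+$. There is no real obstacle: no completeness, no topology, and no Archimedean hypothesis is needed, and positivity of the three elements enters only through the inequalities $x_1\ge 0$, $x_2\ge 0$ which make the four summands dominate $x$ (hence dominate $x\wedge(x_1+x_2)$). So the proof is a short sequence of elementary lattice manipulations, and I would present it in three or four lines once the translation identity is invoked.
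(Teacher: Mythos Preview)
Your argument is correct. The clean version you settle on---expanding $(x\wedge x_1)+(x\wedge x_2)$ via two applications of the translation identity $(a\wedge b)+c=(a+c)\wedge(b+c)$ to obtain the four-term meet $(x+x)\wedge(x+x_2)\wedge(x_1+x)\wedge(x_1+x_2)$, and then observing that each of the four summands dominates $x\wedge(x_1+x_2)$ using only $x,x_1,x_2\ge 0$---is valid and uses nothing beyond the abelian $\ell$-group axioms.

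As for comparison: the paper does not actually supply a proof of this lemma. It is stated as a recalled elementary fact, with a pointer to \cite[Lemma 1.4]{AB} for the Riesz-space version, and the reader is implicitly expected to note (as you did) that the Riesz-space argument uses only additive and lattice structure and hence carries over verbatim. So your proposal is not just consistent with the paper's approach---it \emph{is} the proof the paper leaves to the reader. The earlier, more meandering parts of your write-up (subtracting $x\wedge x_2$, etc.) can be dropped; the four-term-meet computation is the entire argument and should be presented directly.
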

\begin{thm}
Suppose $(G,\tau)$ is a locally solid $\ell$-group. Then $(G,u\tau)$ is again a locally solid $\ell$-group. If $\tau$ is Hausdorff, so is $u\tau$.
\end{thm}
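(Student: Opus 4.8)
The plan is to build a local base at the identity for $u\tau$ out of the $\tau$-solid neighborhoods and check the axioms of a locally solid group topology directly. Concretely, if $\{V_i\}_{i\in I}$ is a base of solid $\tau$-neighborhoods of $0$, then for each $i$ and each positive $u\in G$ set
\[
U_{i,u}=\{x\in G:\ |x|\wedge u\in V_i\},
\]
and let the proposed $u\tau$-base $\mathcal N$ consist of all finite intersections $U_{i_1,u_1}\cap\cdots\cap U_{i_n,u_n}$. First I would verify that each $U_{i,u}$ is solid: if $|y|\le|x|$ and $x\in U_{i,u}$, then $|y|\wedge u\le|x|\wedge u\in V_i$, and solidness of $V_i$ gives $|y|\wedge u\in V_i$. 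Hence every member of $\mathcal N$ is solid, so once $\mathcal N$ is shown to be a neighborhood base of a group topology, that topology is automatically locally solid. I would also note that the net characterization $x_\alpha\xrightarrow{u\tau}x$ from the definition is exactly convergence in the topology generated by $\mathcal N$, so the topology we construct is the right one.

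Next I would check the standard neighborhood-filter axioms for a (necessarily translation-invariant) group topology, in the form: (i) every $U\in\mathcal N$ contains $0$ — clear, since $|0|\wedge u=0\in V_i$; (ii) for $U,U'\in\mathcal N$ there is $U''\in\mathcal N$ with $U''\subseteq U\cap U'$ — immediate since $\mathcal N$ is closed under finite intersections; (iii) for each $U_{i,u}\in\mathcal N$ there is $U'\in\mathcal N$ with $U'+U'\subseteq U_{i,u}$ — here I would pick a solid $V_j$ with $V_j+V_j\subseteq V_i$ and claim $U_{j,u}+U_{j,u}\subseteq U_{i,u}$; (iv) $-U_{i,u}=U_{i,u}$, which holds since $|-x|=|x|$. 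Then (iii) is the crux, and it is exactly where the elementary lemma just quoted enters: for $x,y\in U_{j,u}$ we have
\[
|x+y|\wedge u\le(|x|+|y|)\wedge u\le(|x|\wedge u)+(|y|\wedge u)\in V_j+V_j\subseteq V_i,
\]
using that $V_j$ is solid to absorb the first $\le$ (since $|x+y|\wedge u\le(|x|+|y|)\wedge u$ and the right side lies in $V_j+V_j\subseteq V_i$, solidness finishes it), and the lemma for the middle step. So $x+y\in U_{i,u}$, establishing (iii).

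Finally, I would handle the lattice-continuity and Hausdorff clauses. For continuity of $\vee,\wedge,|\cdot|$ it suffices (by local solidness, which gives continuity of the lattice operations automatically for solid topologies once the group topology is in place — but to be safe I would give the one-line argument) to use $\bigl||x|-|y|\bigr|\le|x-y|$ and the analogous inequalities for $x\vee y$, $x\wedge y$; combined with solidness of each $U_{i,u}$ these show the operations are continuous, in fact uniformly. For the Hausdorff statement: suppose $\tau$ is Hausdorff and $x\ne 0$; then $|x|>0$ is not $0$, and taking $u=|x|$ we get $|x|\wedge u=|x|$, which by Hausdorffness of $\tau$ is separated from $0$ by some solid $V_i$, so $x\notin U_{i,u}$; hence $u\tau$ is Hausdorff. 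The only genuine obstacle is step (iii), and it is entirely dispatched by the preceding lemma together with solidness of the $V_i$'s; everything else is bookkeeping with the definition of $U_{i,u}$ and the basic inequalities $|x+y|\le|x|+|y|$, $|-x|=|x|$.
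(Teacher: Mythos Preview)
Your proposal is correct and follows essentially the same route as the paper: you define the same sets $U_{i,u}=\{x:|x|\wedge u\in V_i\}$, establish solidness the same way, dispatch the additive axiom via the preceding lemma exactly as the paper does, and prove Hausdorffness by the same specialization $u=|x|$. The only cosmetic difference is that the paper verifies the axioms of a characterization theorem from \cite{H} (including an extra condition $(V-x^{+})\wedge(V+x^{-})\subseteq U$ encoding lattice continuity), whereas you check the standard abelian-group neighborhood axioms and then recover lattice continuity directly from solidness; both packagings are valid and the substance is identical.
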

\begin{proof}
Suppose $\{U_i\}_{i\in I}$ is a local basis of solid neighborhoods at identity for $G$. For each positive $u\in G$, put
\[U_{i,u}=\{x\in G, |x|\wedge u\in U_i\}.\]
We show that $\mathbf{B}:=\{U_{i,u}\}$ forms a basis for a locally solid topology on $G$ whose convergence is as the same as unbounded convergence. Note that since every $U_{i}$ is solid, we conclude that $U_{i,u}$ is also solid. In fact, we investigate properties of \cite[Theorem 3.5]{H}. For every index $i$, there is an $j$, such that $U_j+U_j\subseteq U_i$. Thus, for every positive element $u\in G$, one may verify $U_{j,u}+U_{j,u}\subseteq U_{i,u}$. It can be easily seen that each $U_{i,u}$ is symmetric.
For each $U_{i,u}$ and for each $y\in U_{i,u}$, there exists an index $j$ with $|y|\wedge u+U_{j}\subseteq U_i$. Now, observe that $y+U_{j,u}\subseteq U_{i,u}$.
For every $U\in \mathbf{B}$ and for every $x\in G$, we must show that there is a neighborhood $V\in \mathbf{B}$ such that $(V-x^{+})\wedge(V+x^{-})\subseteq U$.
Suppose $U=U_{i,u}$ for some $i$ and for some $u$. There exists an $j$ with $(U_{j}-x^{+})\vee(U_{j}+x^{-})\subseteq U_i$. We claim that $V:=U_{j,u}$ does the job.
Let $z\in V$ be fixed. By solidness of $U_{j,u}$, without loss of generality, we may assume that $z\geq 0$; otherwise consider $|z|$. We see that $z\wedge u\in U_{j}$. So,
\[0\leq (z+x^{-})\wedge u\leq z\wedge u+x^{-}\wedge u\leq z\wedge u+x^{-}.\]
By hypothesis, $z\wedge u+x^{-} \in U_i$ so that $(z+x^{-})\wedge u\in U_i$. Moreover, for each $w\in V$, we have
\[|(w-x^{+})\wedge (z+x^{-})|\leq |w-x^{+}|\wedge (z+x^{-})\leq z+x^{-}.\]
This implies that $(U_{j,u}-x^{+})\wedge(U_{j,u}+x^{-})\subseteq U_{i,u}$.

Finally, suppose $\tau$ is Hausdorff. We show that $u\tau$ is also Hausdorff. By \cite[Theorem 3.3]{H}, it is enough to prove that $\cap_{U\in \mathbf{B}} U=\{0\}$. Suppose $x\in U_{i,u}$ for all $i$ and for all $u\in G_{+}$. In particular, this means that $x\in U_{i,|x|}$ for all $i\in I$. Since $\tau$ is Hausdorff, we obtain the desired result.
\end{proof}
This point helps us to generalize some results dealing with unbounded convergence in locally solid Riesz spaces to locally solid $\ell$-groups; for example, a homomorphism $T$ between locally solid $\ell$-groups $(G,\tau)$ and $(H,\tau')$ is said to be {\bf unbounded Dunford-Pettis} ($u\tau$-Dunford-Pettis) if it maps every $\tau$-bounded $u\tau$-null net into $\tau'$-null nets. We finished this note with an extension of \cite[Proposition 4]{EGZ}, in this theme.
\begin{prop} Let $T\colon G\rightarrow H$ be a positive $u\tau$-Dunford-Pettis homomorphism between locally solid $\ell$-groups with $H$ Dedekind complete. Then the Kantorovich-like extension $S\colon G\rightarrow H$ defined via
\begin{center}
$S(y)=\sup \left\{ T(y\wedge y_{\alpha})\colon  (y_{\alpha})\subseteq G_{+},  y_{\alpha}\xrightarrow{u\tau} 0\right\}$
\end{center}
for every $y\in G_{+}$ is again $u\tau$-Dunford-Pettis.
\end{prop}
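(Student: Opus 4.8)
The plan is to show first that $S$ is a well-defined positive group homomorphism satisfying $S\le T$ on $G_{+}$, and then to deduce the Dunford-Pettis property by a squeeze argument that uses nothing more than the fact that $T$ itself is $u\tau$-Dunford-Pettis together with local solidity of $\tau'$.

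\emph{Well-definedness and domination.} Fix $y\in G_{+}$. For every $u\tau$-null net $(y_{\alpha})\subseteq G_{+}$ we have $0\le y\wedge y_{\alpha}\le y$, hence $0\le T(y\wedge y_{\alpha})\le T(y)$ since $T$ is positive. Thus $\{T(y\wedge y_{\alpha})\}$ is bounded above by $T(y)$, and as $H$ is Dedekind complete the supremum $S(y)$ exists and satisfies $0\le S(y)\le T(y)$; in particular $S$ is monotone on $G_{+}$. Next I would verify that $S$ is additive on the cone, so that it extends (uniquely) to a positive homomorphism $G\to H$. For the inequality $S(y+y')\le S(y)+S(y')$ one applies the $\wedge$-subadditivity lemma $y_{\alpha}\wedge(y+y')\le y_{\alpha}\wedge y+y_{\alpha}\wedge y'$ and positivity of $T$ to get $T(y_{\alpha}\wedge(y+y'))\le S(y)+S(y')$ for every admissible net. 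For the reverse inequality, given $u\tau$-null nets $(z_{\alpha})_{\alpha\in A}$ and $(w_{\beta})_{\beta\in B}$ in $G_{+}$, pass to the pointwise sum $(z_{\alpha}+w_{\beta})_{(\alpha,\beta)\in A\times B}$, which is again $u\tau$-null because (by the theorem proved above) $u\tau$ is a group topology; since $(y\wedge z_{\alpha})+(y'\wedge w_{\beta})\le (y+y')\wedge(z_{\alpha}+w_{\beta})$, applying $T$ and taking suprema first in $\alpha$ and then in $\beta$ gives $S(y)+S(y')\le S(y+y')$. Hence $S$ is a positive homomorphism dominated by $T$ on $G_{+}$.

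\emph{Reduction to positive nets and the squeeze.} Let $(x_{\gamma})\subseteq G$ be $\tau$-bounded and $u\tau$-null. Because the $u\tau$-neighborhoods of $0$ are solid (again by the theorem above), $(|x_{\gamma}|)$ is also $\tau$-bounded and $u\tau$-null, and $|S(x_{\gamma})|\le S(|x_{\gamma}|)$ since $S$ is a positive homomorphism; by local solidity of $\tau'$ it therefore suffices to show $S(|x_{\gamma}|)\xrightarrow{\tau'}0$, i.e.\ we may assume $(x_{\gamma})\subseteq G_{+}$. For such a net and any $u\tau$-null $(y_{\alpha})\subseteq G_{+}$ we have $x_{\gamma}\wedge y_{\alpha}\le x_{\gamma}$, whence $T(x_{\gamma}\wedge y_{\alpha})\le T(x_{\gamma})$ by positivity of $T$; taking the supremum, $0\le S(x_{\gamma})\le T(x_{\gamma})$. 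Since $T$ is $u\tau$-Dunford-Pettis and $(x_{\gamma})$ is $\tau$-bounded and $u\tau$-null, $T(x_{\gamma})\xrightarrow{\tau'}0$, and then the squeeze $0\le S(x_{\gamma})\le T(x_{\gamma})$ together with local solidity of $\tau'$ forces $S(x_{\gamma})\xrightarrow{\tau'}0$. This shows $S$ is $u\tau$-Dunford-Pettis.

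\emph{Main obstacle.} The only step needing genuine care is checking that $S$ is actually a homomorphism, i.e.\ additive on $G_{+}$: both inequalities there rest on the two facts established earlier in the paper — the $\wedge$-subadditivity lemma, and the fact that $u\tau$ is a locally solid group topology, so that sums of $u\tau$-null nets are $u\tau$-null and the $u\tau$-neighborhoods are solid. Once $S$ is known to be a positive homomorphism with $S\le T$ on the cone, the Dunford-Pettis conclusion is immediate from the squeeze. (Incidentally, testing the defining supremum against the net whose first term is $y$ and whose remaining terms are $0$ — which is $u\tau$-null — even gives $S(y)=T(y)$; but the plan above only uses the weaker $S\le T$, which is all that is really required.)
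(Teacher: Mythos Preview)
Your proof is correct and follows essentially the same route as the paper: verify additivity of $S$ on the positive cone via the two inequalities coming from $\wedge$-subadditivity and from the fact that sums of $u\tau$-null nets are $u\tau$-null, extend to a positive homomorphism, and then use the squeeze $0\le S(x_{\gamma})\le T(x_{\gamma})$ together with the $u\tau$-Dunford--Pettis property of $T$. You are in fact more careful than the paper in two places: you explicitly reduce to positive nets before applying the squeeze, and you invoke local solidity of $\tau'$ to pass from $0\le S(x_{\gamma})\le T(x_{\gamma})\to 0$ to $S(x_{\gamma})\to 0$; the paper leaves both steps implicit. Your parenthetical observation that the defining supremum actually forces $S=T$ (by testing against a net that begins at $y$ and is eventually $0$) is also correct as the statement is written, and is not noted in the paper.
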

\begin{proof}
Suppose $y,z\in G_{+}$. Then
\begin{center}
$S(y+z)=\sup_{\beta} \{T((y+z)\wedge \gamma_{\beta})\}\le \sup_{\beta}\{T(y\wedge \gamma_{\beta})\}+\sup_{\beta}\{T(z\wedge \gamma_{\beta})\}\le S(y)+S(z)$,
\end{center}
in which, $(\gamma_{\beta})$ is a positive net that is $u\tau$-null. On the other hand,
\[T(y\wedge a_{\alpha})+T(z\wedge b_{\beta})=T(y\wedge a_{\alpha}+z\wedge b_{\beta})\leq T((y+z)\wedge (a_{\alpha}+b_{\beta}))\leq S(y+z),\]
provided that two positive nets $(a_{\alpha}),(b_\beta)$ are $u\tau$-null so that $S(y)+S(z)\leq S(y+z)$. Therefore, by \cite[Lemma 1]{Z2}, $S$ extends to a positive homomorphism. Denote by $S$ the extended homomorphism $S\colon G\rightarrow H.$

We show that $S$ is also $u\tau$-Dunford-Pettis. Suppose bounded net $(y_{\alpha})\subseteq X$ is $u\tau$-null. Therefore, we have
\[S(y_{\alpha})=\sup_{\beta}T(y_{\alpha}\wedge b_{\beta})\leq T(y_{\alpha})\rightarrow e_H,\]
in which $(b_{\beta})$ is a positive net in $G$ which is convergent to the identity in the $u\tau$-topology.
\end{proof}
\begin{rem}
Finally, it is worthwhile to mention that if a positive homomorphism $T$ is dominated by a $u\tau$-Dunford-Pettis homomorphism $S$, then $T$ is necessarily $u\tau$-Dunford-Pettis.
\end{rem}
\section{topological rings}
Now, we consider a version of \cite[Proposition 2.1]{Z} while scalar multiplication is absent. Recall that subset $B$ from a topological ring $X$ is called bounded if for each zero neighborhood $V\subseteq X$, there exists a zero neighborhood $U\subseteq X$ with $UB\subseteq V$ and $BU\subseteq V$. 
\begin{prop}\label{0}

Suppose $X$ is a topological ring with unity whose underlying topological group is connected. Then a set $B\subseteq X$ is bounded if and only if so is in the sense of a topological group.
\end{prop}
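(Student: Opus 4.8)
The plan is to prove the two implications separately. The identity of $X$ and the connectedness of its additive group enter through one elementary fact: in a connected topological group $\bigcup_{m}mW$ is a clopen subgroup, hence all of the group, for every symmetric zero neighbourhood $W$; in particular the unit $1$ — and with it every singleton — is a bounded subset.

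\emph{Ring-bounded $\Rightarrow$ group-bounded.} Let $V$ be an arbitrary zero neighbourhood. By ring-boundedness choose a zero neighbourhood $U$ with $UB\subseteq V$ and, replacing $U$ by $U\cap(-U)$, take it symmetric. Since $X$ is connected, $1\in mU$ for some $m$, say $1=u_1+\dots+u_m$ with $u_i\in U$. Then for every $b\in B$,
\[
b=1\cdot b=u_1b+\dots+u_mb\in\underbrace{V+\dots+V}_{m}=mV ,
\]
so $B\subseteq mV$; as $V$ was arbitrary, $B$ is bounded in the topological-group sense. This also indicates that connectedness is essential: without it $1$ need not be a bounded set.

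\emph{Group-bounded $\Rightarrow$ ring-bounded.} Fix a zero neighbourhood $V$. By continuity of multiplication at the origin choose a symmetric zero neighbourhood $W$ with $WW\subseteq V$. Because $B$ is group-bounded there is $n\in\nn$ with $B\subseteq nW$; this step, which pins down $n$, must come first. Now use continuity of addition to pick a symmetric zero neighbourhood $U$ with $\underbrace{U+\dots+U}_{n}\subseteq W$. For $u\in U$ and $b=w_1+\dots+w_n\in nW$ (with $w_i\in W$) we have $nu\in W$, hence $(nu)w_i\in WW\subseteq V$, and by distributivity $(nu)w_1+\dots+(nu)w_n=n(ub)$; therefore $n(ub)\in\underbrace{V+\dots+V}_{n}=nV$, and symmetrically $n(bu)\in nV$. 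It then remains to pass from $n(ub)\in nV$ to $ub\in V$ (and likewise for $bu$), which yields $UB\subseteq V$ and $BU\subseteq V$; this is the point at which the boundedness of the unit, i.e.\ connectedness, is brought in once more to absorb the factor $n$.

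The step I expect to be the main obstacle is precisely this last reduction. In a topological group ``$B\subseteq nW$'' says that $B$ sits inside an $n$-fold \emph{sum} of copies of $W$, not inside a scalar multiple of $W$, so — unlike in the vector-space version of the statement that this result imitates — there is no scalar $\tfrac1n$ available to cancel the integer $n$. Overcoming this requires the exact order of the choices (absorb $B$ first, determine $n$, only then shrink to $U$), the distributive law, and the fact that the unit is bounded; mismanaging the order leaves a spurious $n$ in front of $V$ and breaks the argument. One natural way to organize the bookkeeping is to carry along from the outset a chain $V=V_0\supseteq V_1\supseteq\cdots$ of zero neighbourhoods with $\underbrace{V_k+\dots+V_k}_{2^{k}}\subseteq V$, absorb $B$ against increasingly small multiplicative neighbourhoods, and pick the index $k$ large enough that the absorbing integer does not exceed $2^{k}$.
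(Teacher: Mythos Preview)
Your ring-bounded $\Rightarrow$ group-bounded direction is correct and is exactly the paper's argument: the paper also writes $1\in mV$ (from connectedness) and then $b=1\cdot b\in m(VB)\subseteq mW$, only dressed up as a proof by contradiction.

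Your group-bounded $\Rightarrow$ ring-bounded direction, however, is not a proof. You yourself stop at $n(ub)\in nV$ and say ``it then remains to pass from $n(ub)\in nV$ to $ub\in V$''; that passage is exactly what has to be supplied, and it is false in general: in an additive group $nV$ denotes the $n$-fold sumset, so $nx\in nV$ does \emph{not} force $x\in V$ (take $V=\{0,1\}$ in $\zz$ and $x=-1$, $n=2$). Invoking ``boundedness of the unit'' does not help here either --- that fact was already fully spent in the other implication, and there is nothing in it that cancels an additive $n$. The chain $V=V_0\supseteq V_1\supseteq\cdots$ you sketch at the end is not carried out, and as written it does not close the gap: the integer $n$ absorbing $B$ into $nW$ depends on the neighbourhood $W$, so shrinking $W$ makes $n$ grow, and you give no reason why the growth can be controlled by the halving of the $V_k$.

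For comparison, the paper organises this direction differently. It never forms $nu$; instead it fixes $V$ with $VV\subseteq W$, finds $n$ with $B\subseteq nV$, then chooses $V_0$ with $nV_0\subseteq V$, and writes the single chain
\[
V_0B\subseteq nV_0V\subseteq VV\subseteq W .
\]
Here one keeps $u\in V_0$ small and bounds $ub=uv_1+\dots+uv_n\in n(V_0V)$ directly, so there is no factor $n$ in front of $ub$ to cancel. The inclusion $n(V_0V)\subseteq VV$ is doing the work that your ``cancel $n$'' step was meant to do; that is the point you are missing, and it is what you should try to justify rather than the implication $n(ub)\in nV\Rightarrow ub\in V$, which is simply not available.
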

\begin{proof}
First, consider $X$ as a topological group and assume that $B\subseteq X$ is bounded. Furthermore, suppose $W$ is an arbitrary zero neighborhood. There is a zero neighborhood $V$ with $VV\subseteq W$. Find positive integer $n$ such that $B\subseteq nV$. Choose zero neighborhood $V_0$ with $nV_0\subseteq V$. Therefore, $V_0B\subseteq nV_0 V\subseteq VV\subseteq W$. Similarly, $BV_0\subseteq W$.

For the converse, consider $X$ as a topological ring and suppose $B\subseteq X$ is bounded. For an arbitrary zero neighborhood $W$ , there is a neighborhood $V$ with $VV\subseteq W$, $BV\subseteq W$ and $VB\subseteq W$. We claim there exists $n\in \Bbb N$ such that $B\subseteq n W$. Suppose on a contrary, for any $n\in \Bbb N$, $B\nsubseteq nW$. Since $X$ is connected, by \cite[Chapter III, Theorem 6]{Taq}, $X=\cup_{n=1}^{\infty}nV$. So, there are a sequence $(x_n)\subseteq B$ such that $x_n\notin nW$ and  an $m\in \Bbb N$ with $1\in mV$. So, $x_m\in mVB\subseteq mW$ a contradiction.

\end{proof}
Now, we recall some notes about bounded group homomorphisms between topological rings; for more expositions on this concept, see \cite{Z2}.
\begin{definition}\rm
Let $X$ and $Y$ be topological rings. A group homomorphism $T:X \to
Y$ is said to be
\begin{itemize}
\item[$(1)$] \emph{{\sf nr}-bounded} if there exists a
zero neighborhood $U\subseteq X$  such that $T(U)$ is bounded in $Y$;

\item[$(2)$] \emph{{\sf br}-bounded} if for every bounded set $B
\subseteq X$, $T(B)$ is bounded in $Y$.
\end{itemize}
\end{definition}

The set of all {\sf nr}-bounded ({\sf br}-bounded) homomorphisms
from a topological ring $X$ to a topological ring $Y$ is denoted
by ${\sf Hom_{nr}}(X,Y)$ (${\sf Hom_{br}}(X,Y)$). We write ${\sf
Hom}(X)$ instead of ${\sf Hom}(X,X)$.

\smallskip
Now, assume $X$ is a topological ring. The class of all ${\sf
nr}$-bounded group homomorphisms on $X$ equipped with the topology of
uniform convergence on some zero neighborhood is denoted by
${\sf Hom_{nr}}(X)$. Observe that a net $(S_{\alpha})$ of ${\sf
nr}$-bounded homomorphisms converges uniformly on a neighborhood $U$ to a homomorphism $S$  if for each neighborhood $V$  there exists an $\alpha_0$ such that for each
$\alpha\geq\alpha_0$, $(S_{\alpha}-S)(U)\subseteq V$.

\smallskip
The class of all ${\sf br}$-bounded group homomorphisms on $X$ endowed
with the topology of uniform convergence on bounded sets is denoted
by ${\sf Hom_{br}}(X)$. Note that a net $(S_{\alpha})$ of ${\sf
br}$-bounded homomorphisms uniformly converges to a homomorphism $S$
on a bounded set $B\subseteq X$ if for each zero neighborhood $V$
there is an $\alpha_0$ with $(S_{\alpha}-S)(B) \subseteq V$ for
each $\alpha\ge \alpha_0$.

\smallskip
The class of all continuous group homomorphisms on $X$ equipped with the
topology of ${\sf cr}$-convergence is denoted by ${\sf Hom_{cr}}(X)$.
A net $(S_{\alpha})$ of continuous homomorphisms ${\sf cr}$-converges
to a homomorphism $S$ if for each zero neighborhood $W$, there
is a neighborhood $U$ such that for every zero neighborhood $V$ there exists an $\alpha_0$ with
$(S_{\alpha}-S)(U)\subseteq VW$ for each $\alpha\geq\alpha_0$.

\smallskip
Note that ${\sf
Hom_{nr}}(X)$, ${\sf Hom_{br}}(X)$, and ${\sf Hom_{cr}}(X)$ form subrings
of the ring of all group homomorphisms on $X$, in which, the multiplication is given by function composition.

In contrast with the case of all bounded homomorphisms between topological groups ( considered in \cite{KZ}), there are no more relations between these classes of bounded group homomorphisms between topological rings; see \cite[Example 2.1, Example 2.2, Example 3.1]{Z} for some examples which illustrate the situation.
\begin{thm}
Suppose $X$ is a topological ring with unity. Then, ${\sf Hom_{cr}(X)}$ is complete if and only if so is $X$.
\end{thm}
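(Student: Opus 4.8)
The plan is to pass completeness back and forth through the left regular representation of $X$, in the spirit of the group-theoretic statement for ${\sf Hom_{c}}$ in \cite{KZ} and of \cite{Z}.

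\emph{The direction "${\sf Hom_{cr}}(X)$ complete $\Rightarrow$ $X$ complete".} First I would exhibit $X$ as a closed subgroup of ${\sf Hom_{cr}}(X)$. For $x\in X$ put $L_x(y)=xy$; then $L_x$ is additive and, by continuity of multiplication, continuous, so $L_x\in{\sf Hom_{cr}}(X)$, and $\Phi\colon x\mapsto L_x$ is an additive map, injective because $L_x(1)=x$ (here the unity is used). The substantive checks are that $\Phi$ is a homeomorphism onto its image and that the image is $cr$-closed. For the former I would verify that $x_\beta\to x$ in $X$ is equivalent to $L_{x_\beta}\xrightarrow{cr}L_x$: one implication is continuity of multiplication uniformly on a neighborhood, carried out as in the proof of Proposition \ref{0}; the other, as well as closedness of $\Phi(X)$, follows from the observation that $cr$-convergence forces pointwise convergence. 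Concretely, if $L_{x_\beta}\xrightarrow{cr}T$, take the neighborhood $U$ furnished by the definition; since singletons are bounded (our standing assumption), $\{1\}\subseteq nU$ for some $n$, so evaluating at $1$ gives $x_\beta=L_{x_\beta}(1)\to T(1)=:x$, and continuity of multiplication then yields $L_{x_\beta}(y)=x_\beta y\to xy$, whence $T=L_x$. Thus $\Phi(X)$ is a closed subgroup of the complete group ${\sf Hom_{cr}}(X)$, hence complete, and $X\cong\Phi(X)$ is complete.

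\emph{The direction "$X$ complete $\Rightarrow$ ${\sf Hom_{cr}}(X)$ complete".} Let $(S_\alpha)$ be a $cr$-Cauchy net. I would first establish pointwise Cauchyness: for a fixed $y$ and a prescribed zero neighborhood $W'$, the $cr$-Cauchy condition supplies a zero neighborhood $U$ on which the differences $S_\alpha-S_\beta$ are eventually trapped in an arbitrarily small set; writing $y\in nU$ (boundedness of $\{y\}$) and splitting $y$ into $n$ summands from $U$, one bounds $S_\alpha(y)-S_\beta(y)$ by an $n$-fold sum of controlled pieces, which is forced inside $W'$ by shrinking the auxiliary neighborhoods, so $(S_\alpha(y))_\alpha$ is Cauchy and converges; set $S(y):=\lim_\alpha S_\alpha(y)$. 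Then $S$ is additive, being a pointwise limit of additive maps. Next I would show $S$ is continuous: extract from $cr$-Cauchyness one neighborhood $U$ on which $(S_\alpha)$ is uniformly Cauchy up to a given $W$; since every $S_\alpha$ is continuous and the tail of $(S_\alpha(U))$ sits in a fixed small set, passing to the pointwise limit places $S(U)$ in (the closure of) that set, so $S$ is continuous at $0$, hence continuous, i.e. $S\in{\sf Hom_{cr}}(X)$. Finally I would check $S_\alpha\xrightarrow{cr}S$ directly, by letting $\beta$ run in the inclusion witnessing $cr$-Cauchyness and using that the relevant neighborhood survives, after a harmless enlargement, the pointwise limit.

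\emph{Main obstacle.} The first direction needs only continuity of multiplication and the unity; the real work is in the second, namely proving that the pointwise limit $S$ is \emph{continuous} and that $S_\alpha\to S$ in the $cr$-topology, not merely pointwise. The $cr$-topology is designed so that a Cauchy net comes equipped with a single neighborhood $U$ on which it is uniformly Cauchy, and the whole argument hinges on extracting that $U$ correctly from the nested quantifiers ($\forall W\,\exists U\,\forall V$) in the definition and then checking it is respected by the pointwise limit. I would therefore isolate beforehand, as a lemma, the implication "$cr$-convergence $\Rightarrow$ pointwise convergence" — which quietly uses the standing hypothesis that singletons are bounded — since it is the common tool behind both directions.
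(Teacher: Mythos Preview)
Your plan follows the paper's proof closely. Both directions use the same two devices: the left regular representation $x\mapsto L_x$, $L_x(y)=xy$, for ``$\mathsf{Hom_{cr}}(X)$ complete $\Rightarrow X$ complete'', and the pointwise-limit construction for the converse. The differences are only in packaging. For the first implication you phrase the argument as ``$X$ embeds homeomorphically onto a $cr$-closed subgroup of a complete group'', whereas the paper simply checks that $(L_{x_\alpha})$ is $cr$-Cauchy (exactly your computation $(x_\alpha-x_\beta)U\subseteq VU\subseteq VW$) and then, implicitly, evaluates the limit at $1$; your explicit closedness check via evaluation at $1$ fills in a step the paper leaves to the reader. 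For the second implication you are more careful than the paper: after producing the pointwise limit $S$, the paper asserts in one line that the convergence takes place in $\mathsf{Hom_{cr}}(X)$ and invokes an external proposition from \cite{Z} for membership of $S$, while you propose to verify directly both that $S$ is continuous and that $S_\alpha\xrightarrow{cr}S$. Your intended lemma ``$cr$-convergence $\Rightarrow$ pointwise convergence'' is precisely the mechanism the paper uses (via $x\in nU$ and a suitable choice of $V$) without isolating it; extracting it is a reasonable organizational choice, and you are right that it leans on the standing hypothesis that singletons are bounded.
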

\begin{proof}
Suppose $X$ is complete and $(T_{\alpha})$ is a Cauchy net of continuous group homomorphisms on $X$. Assume that $W$ is an arbitrary zero neighborhood. There is a zero neighborhood $U$ such that for any neighborhood $V$ we can choose an $\alpha_0$ with $(T_{\alpha}-T_{\beta})(U)\subseteq VW$ for each $\alpha\geq\alpha_0$ and $\beta\geq\alpha_0$. For any  fixed $x\in X$, find a positive integer $n$ such that $x\in nU$. Pick a zero neighborhood $V_0$ such that $V_0V_0\subseteq W$ and $nV_0\subseteq W$. Therefore, for sufficiently large $\alpha$ and $\beta$,  $(T_{\alpha}-T_{\beta})(x)\in nV_0 V_0\subseteq W$. Thus, $(T_{\alpha}(x))$ is a Cauchy net in $X$ so that convergent. Suppose $T_{\alpha}(x)\rightarrow \alpha_x\in X$. Define $T:X\to X$ via $T(x)=\alpha_x$. Since this convergence holds in ${\sf Hom_{cr}(X)}$, by \cite[Proposition 3.1]{Z}, $T$ is also $br$-bounded.

For the converse, assume that ${\sf Hom_{cr}(X)}$ is complete and $(x_{\alpha})$ is a Cauchy net in $X$. Suppose $W$ is an arbitrary zero neighborhood in $X$. Define $T_{\alpha}:X\to X$ via $T_{\alpha}(x)=x_{\alpha}x$. It can be verified that each $T_{\alpha}$ is a continuous group homomorphism. Furthermore, $(T_{\alpha})$ is a Cauchy net in ${\sf Hom_{cr}(X)}$; consider any neighborhood $U\subseteq W$, for any zero neighborhood $V$ choose index $\alpha_0$ such that $(x_{\alpha}-x_{\beta})\in V$ for each $\alpha\geq\alpha_0$ and $\beta\geq\alpha_0$ so that
\[(T_{\alpha}-T_{\beta})(U)=(x_{\alpha}-x_{\beta})U\subseteq VU\subseteq VW.\]
\end{proof}
\begin{thm}
Suppose $X$ is a topological ring with unity. Then, ${\sf Hom_{br}(X)}$ is complete if and only if so is $X$.
\end{thm}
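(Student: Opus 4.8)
The plan is to follow the pattern of the preceding theorem on ${\sf Hom_{cr}}(X)$, substituting for each use of scalar multiplication two elementary facts about a topological ring $X$ with unity. First, every singleton $\{x\}$ is bounded: continuity of the ring multiplication at $(0,x)$ produces, for any zero neighborhood $V$, a zero neighborhood $U$ with $Ux\subseteq V$, and continuity at $(x,0)$ gives a zero neighborhood $U'$ with $xU'\subseteq V$, so $U\cap U'$ witnesses boundedness of $\{x\}$. Second, the product $B_1B_2$ of two bounded sets is bounded: given $V$, choose a zero neighborhood $W$ with $WB_2\subseteq V$ and then $U$ with $UB_1\subseteq W$, so that $U(B_1B_2)=(UB_1)B_2\subseteq WB_2\subseteq V$, and argue symmetrically on the other side, intersecting the two neighborhoods obtained. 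I shall also use that a topological group has a base of closed zero neighborhoods, and the fact recorded in \cite[Proposition 3.1]{Z} that a homomorphism which is the limit of a net of ${\sf br}$-bounded homomorphisms in the topology of uniform convergence on bounded sets is again ${\sf br}$-bounded.

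For the forward implication, assume $X$ is complete and let $(T_{\alpha})$ be a Cauchy net in ${\sf Hom_{br}}(X)$. Fixing $x\in X$ and using that $\{x\}$ is bounded, for each zero neighborhood $V$ there is an $\alpha_0$ with $(T_{\alpha}-T_{\beta})(x)\in V$ whenever $\alpha,\beta\ge\alpha_0$; hence $(T_{\alpha}(x))$ is Cauchy in $X$ and converges, and I set $T(x):=\lim_{\alpha}T_{\alpha}(x)$. Passing to limits, $T$ is a group homomorphism. To see $T_{\alpha}\to T$ uniformly on bounded sets, fix a bounded set $B$ and a zero neighborhood $V'$, pick a closed zero neighborhood $V\subseteq V'$ and an $\alpha_0$ with $(T_{\alpha}-T_{\beta})(B)\subseteq V$ for $\alpha,\beta\ge\alpha_0$; for fixed $\alpha\ge\alpha_0$ and $x\in B$, letting $\beta$ vary gives $T_{\alpha}(x)-T_{\beta}(x)\to T_{\alpha}(x)-T(x)$, which therefore lies in $\overline{V}=V\subseteq V'$. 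Thus $T_{\alpha}\to T$ in ${\sf Hom_{br}}(X)$, and \cite[Proposition 3.1]{Z} shows $T$ is ${\sf br}$-bounded, so ${\sf Hom_{br}}(X)$ is complete.

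For the converse, assume ${\sf Hom_{br}}(X)$ is complete and let $(x_{\alpha})$ be a Cauchy net in $X$. Define $T_{\alpha}\colon X\to X$ by $T_{\alpha}(x)=x_{\alpha}x$; each $T_{\alpha}$ is a continuous group homomorphism, and it is ${\sf br}$-bounded since for bounded $B$ the image $T_{\alpha}(B)=\{x_{\alpha}\}B$ is the product of two bounded sets. The net $(T_{\alpha})$ is Cauchy in ${\sf Hom_{br}}(X)$: given a bounded $B$ and a zero neighborhood $V$, choose $U$ with $UB\subseteq V$ and then $\alpha_0$ with $x_{\alpha}-x_{\beta}\in U$ for $\alpha,\beta\ge\alpha_0$, so $(T_{\alpha}-T_{\beta})(B)=(x_{\alpha}-x_{\beta})B\subseteq UB\subseteq V$. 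By completeness $(T_{\alpha})$ converges to some $T\in{\sf Hom_{br}}(X)$, and since $\{1\}$ is bounded, evaluating the uniform convergence at the unity yields $x_{\alpha}=T_{\alpha}(1)\to T(1)\in X$, so $X$ is complete.

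The steps I expect to need the most care are precisely the two lemmas isolated at the outset — that singletons and products of bounded sets are bounded in a topological ring with unity — since these are exactly the places where the missing scalar multiplication has to be replaced, together with the closed-neighborhood argument that promotes pointwise convergence of the Cauchy net to genuine convergence in the ${\sf br}$-topology. Once those are settled, identifying the limit as a ${\sf br}$-bounded homomorphism is delegated to \cite[Proposition 3.1]{Z}, and the remainder runs strictly parallel to the proof for ${\sf Hom_{cr}}(X)$.
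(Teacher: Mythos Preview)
Your argument is correct and follows essentially the same route as the paper: singletons are bounded so a Cauchy net is pointwise Cauchy, the pointwise limit is the candidate $T$, and the converse embeds $X$ via left multiplication and evaluates the limiting homomorphism at the unity. Your write-up is in fact more careful than the paper's (you explicitly justify that singletons and products of bounded sets are bounded, and you supply the closed-neighborhood argument to upgrade pointwise to uniform-on-bounded-sets convergence); the only slip is bibliographic---the paper invokes \cite[Proposition~2.2]{Z} (the ${\sf br}$ version) rather than \cite[Proposition~3.1]{Z}, which you carried over from the ${\sf cr}$ proof.
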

\begin{proof}
Suppose $X$ is complete and $(T_{\alpha})$ is a Cauchy net of $br$-bounded group homomorphisms on $X$. Assume that $W$ is an arbitrary zero neighborhood in $X$ and fix $x\in X$ which is certainly bounded. There is an $\alpha_0$ such that $(T_{\alpha}-T_{\beta})(x)\in W$ for each $\alpha\geq\alpha_0$ and $\beta\geq\alpha_0$. Thus, $(T_{\alpha}(x))$ is a Cauchy net in $X$ so that convergent. Suppose $T_{\alpha}(x)\rightarrow \alpha_x\in X$. Define $T:X\to X$ via $T(x)=\alpha_x$. Since this convergence holds in ${\sf Hom_{br}(X)}$, by \cite[Proposition 2.2]{Z}, $T$ is also $br$-bounded.

For the converse, assume that ${\sf Hom_{br}(X)}$ is complete and $(x_{\alpha})$ is a Cauchy net in $X$. Suppose $W$ is an arbitrary zero neighborhood in $X$. Define $T_{\alpha}:X\to X$ via $T_{\alpha}(x)=x_{\alpha}x$. It can be verified that each $T_{\alpha}$ is a $br$-bounded group homomorphism. Furthermore, $(T_{\alpha})$ is a Cauchy net in ${\sf Hom_{br}(X)}$; for a fixed bounded set $B\subseteq X$, there is a zero neighborhood $V$ with $VB\subseteq W$. Choose index $\alpha_0$ such that $(x_{\alpha}-x_{\beta})\in V$ for each $\alpha\geq\alpha_0$ and $\beta\geq\alpha_0$ so that
\[(T_{\alpha}-T_{\beta})(B)=(x_{\alpha}-x_{\beta})B\subseteq VB\subseteq W.\]
\end{proof}
\begin{rem}
As opposed to the preceding cases, ${\sf Hom_{nr}(X)}$ does not behave well for completeness, in general. Consider \cite[Remark 2.2]{Z} for more details.
\end{rem}
Finally, we proceed with an affirmative answer for completeness of ${\sf Hom_{nr}(X)}$. First, we have the following fact.
\begin{prop}\label{00}
Suppose $X$ is a topological ring whose topological group is locally bounded. If $(T_{\alpha})$ is a net of $nr$-bounded group homomorphisms which is convergent uniformly on a zero neighborhood $U\subseteq X$ to a homomorphism $T$. Then $T$ is also $nr$-bounded.
\end{prop}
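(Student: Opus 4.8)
The plan is to use local boundedness of $X$ to trap the tail of the net $(T_\alpha)$ inside one fixed bounded set, and then to combine this with the $nr$-boundedness of a single member $T_{\alpha_0}$ of the net. So the strategy is: absorb the error $T_{\alpha_0}-T$ into a bounded set on $U$, and shrink $U$ to a zero neighborhood on which $T_{\alpha_0}$ is already well-behaved.

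Concretely, I would first fix a bounded zero neighborhood $B_0\subseteq X$, which exists precisely because the underlying topological group of $X$ is locally bounded. Applying the definition of uniform convergence of $(T_\alpha)$ to $T$ on the zero neighborhood $U$, with $B_0$ playing the role of the target neighborhood, yields an index $\alpha_0$ with $(T_{\alpha_0}-T)(U)\subseteq B_0$. Since $T_{\alpha_0}$ is $nr$-bounded, I may then choose a zero neighborhood $U_0$ with $T_{\alpha_0}(U_0)$ bounded, and set $W:=U\cap U_0$, which is again a zero neighborhood. For any $x\in W$ one writes $T(x)=T_{\alpha_0}(x)-(T_{\alpha_0}-T)(x)$, where the first summand lies in $T_{\alpha_0}(U_0)$ and the second in $B_0$; hence $T(W)\subseteq T_{\alpha_0}(U_0)-B_0$.

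The argument is then finished by noting that the difference of two bounded subsets of a topological group is again bounded, so $T(W)$ is bounded and $T$ is $nr$-bounded. The only slightly delicate point — and the place I would be most careful — is this last closure property of bounded sets: given a zero neighborhood $V$, pick a symmetric zero neighborhood $V'$ with $V'+V'\subseteq V$; then $2^{k}V'\subseteq 2^{k-1}V$ for all $k\ge 1$ by an easy induction, so if the two bounded sets lie in $n_1V'$ and $n_2V'$ respectively and $2^{k}\ge n_1+n_2$, their difference lies in $(n_1+n_2)V'\subseteq 2^{k}V'\subseteq 2^{k-1}V$. Everything else is a routine unwinding of the definitions of $nr$-boundedness and of uniform convergence on a zero neighborhood, and, in contrast to the ring-completeness theorems above, no use of a unity of $X$ is needed.
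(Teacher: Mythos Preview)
Your overall idea---freeze one index $\alpha_0$, write $T=T_{\alpha_0}-(T_{\alpha_0}-T)$, and trap $T$ on $U\cap U_0$ inside a difference of two ``bounded'' sets---is natural, but it runs aground on the fact that \emph{two different notions of boundedness} are in play in this section. The hypothesis ``the underlying topological group is locally bounded'' gives you a zero neighborhood $B_0$ that is bounded in the \emph{group} sense ($B_0\subseteq nV$ for each $V$), whereas ${\sf nr}$-boundedness of $T_{\alpha_0}$ tells you that $T_{\alpha_0}(U_0)$ is bounded in the \emph{ring} sense (for each $V$ there is $V_0$ with $V_0\,T_{\alpha_0}(U_0)\subseteq V$). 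Your closing paragraph carefully proves that a difference of \emph{group}-bounded sets is \emph{group}-bounded, but that lemma does not apply here: one of your two sets is only known to be ring-bounded, and the conclusion you need ($T$ is ${\sf nr}$-bounded) is again a ring-boundedness statement. So as written the final step does not go through.

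The paper's proof is organized precisely to sidestep this mismatch. Instead of fixing $\alpha_0$ once and for all, it lets the index vary with the target neighborhood: given $W$, it first picks $V,V_1$ with $V_1V_1\subseteq V$ and $V+V\subseteq W$, \emph{then} chooses $\alpha$ so that $(T_\alpha-T)(U)\subseteq V_1$, and finally uses group-boundedness of $U$ only to write $U\subseteq nU_1$ and hence $T_\alpha(U)\subseteq nT_\alpha(U_1)$, which is ring-bounded. This yields $V_0T(U)\subseteq V_0T_\alpha(U)+V_0V_1\subseteq W$ directly from the ring-boundedness definition, with no need to add or subtract a fixed set like your $B_0$. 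If you want to keep your single-$\alpha_0$ architecture, you would have to show that your $B_0$ is also \emph{ring}-bounded and that differences of ring-bounded sets are ring-bounded; the second is easy, but the first is exactly the content of one direction of Proposition~\ref{0} and is not available for free under the hypotheses of this proposition.
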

\begin{proof}
Assume that $W$ is an arbitrary neighborhood in $X$. There are a neighborhood $V$ with $V+V\subseteq W$ and a neighborhood $V_1$ such that $V_1V_1\subseteq V$. Find an $\alpha_0$ such that $(T_{\alpha}-T)(U)\subseteq V_1$ for each $\alpha\geq\alpha_0$. Fix an $\alpha\geq \alpha_0$. There is a neighborhood $U_1$ such that $T_{\alpha}(U_1)$ is bounded in $X$. Since $U$ is bounded, there is an $n\in\Bbb N$ with $U\subseteq nU_1$ so that $T_{\alpha}(U)\subseteq nT_{\alpha}(U_1)$. Observe that by hypothesis, $nT_{\alpha}(U_1)$ is bounded in $X$. So, there is a zero neighborhood $V_0$ such that $V_0\subseteq V_1$ and $V_0 nT_{\alpha}(U_1)\subseteq V$.
Therefore,
\[ V_0T(U)\subseteq V_0T_{\alpha}(U)+V_0V_1\subseteq V_0 n T_{\alpha}(U_1)+V_1V_1\subseteq V+V\subseteq W.\]
\end{proof}
Now, we consider a completeness characterization for  ${\sf Hom_{nr}(X)}$.
\begin{thm}
Suppose $X$ is a topological ring whose topological group is locally bounded. Then ${\sf Hom_{nr}(X)}$ is complete if and only if so is $X$.
\end{thm}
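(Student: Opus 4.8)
The plan is to follow closely the pattern of the two preceding completeness theorems of this section, treating the two implications separately. The only genuinely new ingredient is Proposition~\ref{00}: it is exactly what guarantees, in the forward direction, that the pointwise limit of a Cauchy net of $nr$-bounded homomorphisms is again $nr$-bounded, and it is the one place where local boundedness of the underlying group of $X$ is really used. Everything else will be the same uniform-space bookkeeping as in the ${\sf Hom_{cr}}(X)$ and ${\sf Hom_{br}}(X)$ cases.

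For the implication ``$X$ complete $\Rightarrow {\sf Hom_{nr}}(X)$ complete'', I would start from a Cauchy net $(T_{\alpha})$ in ${\sf Hom_{nr}}(X)$, so there is a zero neighborhood $U\subseteq X$ on which $(T_{\alpha})$ is uniformly Cauchy. Fixing $x\in X$, the singleton $\{x\}$ is bounded (the standing mild hypothesis on topological groups), hence $x\in mU$ for some $m\in\mathbb{N}$; writing $x$ as a sum of $m$ elements of $U$ and using that $T_{\alpha}-T_{\beta}$ is a homomorphism shows that $(T_{\alpha}(x))$ is a Cauchy net in $X$, which therefore converges to an element I call $T(x)$. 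The map $T$ is a group homomorphism as a pointwise limit, and a routine limiting argument (passing to the limit in $\beta$ and absorbing the resulting closure, via $\overline{V}\subseteq V+V$ for symmetric zero neighborhoods $V$) gives $T_{\alpha}\to T$ uniformly on $U$, hence in ${\sf Hom_{nr}}(X)$. The remaining point, that $T$ actually lies in ${\sf Hom_{nr}}(X)$, i.e.\ that $T$ is $nr$-bounded, is precisely Proposition~\ref{00}, applied to the net $(T_{\alpha})$ converging uniformly on the zero neighborhood $U$ to $T$.

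For the converse I would argue as in the ${\sf Hom_{br}}(X)$ theorem, with a bounded neighborhood playing the role of a bounded set. Given a Cauchy net $(x_{\alpha})$ in $X$, put $T_{\alpha}(x)=x_{\alpha}x$; each $T_{\alpha}$ is a group homomorphism by distributivity, and it is $nr$-bounded because, for a bounded zero neighborhood $U_{0}$ (available since the group is locally bounded), $T_{\alpha}(U_{0})=x_{\alpha}U_{0}$ is the image of $U_{0}$ under the continuous map $z\mapsto x_{\alpha}z$ and is again bounded. One then checks that $(T_{\alpha})$ is Cauchy in ${\sf Hom_{nr}}(X)$, witnessed on $U_{0}$: given a zero neighborhood $W$, pick a zero neighborhood $V$ with $VU_{0}\subseteq W$, then an index $\alpha_{0}$ with $x_{\alpha}-x_{\beta}\in V$ for $\alpha,\beta\ge\alpha_{0}$, so that $(T_{\alpha}-T_{\beta})(U_{0})=(x_{\alpha}-x_{\beta})U_{0}\subseteq VU_{0}\subseteq W$. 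Completeness of ${\sf Hom_{nr}}(X)$ then yields $T_{\alpha}\to T$ for some $T\in {\sf Hom_{nr}}(X)$, uniformly on some zero neighborhood $U'$; evaluating at the unity $1\in X$ (which lies in $kU'$ for some $k\in\mathbb{N}$, again by the mild hypothesis) gives $x_{\alpha}=T_{\alpha}(1)\to T(1)$, so $(x_{\alpha})$ converges and $X$ is complete.

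The step I expect to be the real content, as opposed to bookkeeping transcribed from the two previous proofs, is the claim in the forward direction that the limit homomorphism $T$ is $nr$-bounded: without local boundedness of the group the pointwise limit of $nr$-bounded homomorphisms need not remain in ${\sf Hom_{nr}}(X)$ (compare the cautionary Remark preceding Proposition~\ref{00}). Having already isolated this fact as Proposition~\ref{00} is precisely what reduces the theorem to quoting that proposition together with the routine arguments sketched above.
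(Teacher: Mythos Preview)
Your proposal is correct and follows essentially the same route as the paper: in the forward direction you build the pointwise limit on a neighborhood, extend to all of $X$ via $x\in mU$, and invoke Proposition~\ref{00} for $nr$-boundedness; in the converse you use $T_{\alpha}(x)=x_{\alpha}x$ and a bounded neighborhood exactly as the paper does. You are in fact more careful than the paper in two places---you spell out the limiting/closure step to get uniform convergence on $U$, and you make explicit the evaluation at the unity to recover $x_{\alpha}\to T(1)$ (the paper's statement omits the unity hypothesis present in the two preceding theorems, but its own converse argument tacitly needs it just as yours does).
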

\begin{proof}
Suppose $X$ is complete and $(T_{\alpha})$ is a net which is uniformly Cauchy on some zero neighborhood $U\subseteq X$ of $nr$-bounded group homomorphisms. Assume that $W$ is an arbitrary zero neighborhood in $X$. There is an $\alpha_0$ such that $(T_{\alpha}-T_{\beta})(U)\subseteq  W$ for each $\alpha\geq\alpha_0$ and $\beta\geq\alpha_0$. Thus, for each $x\in U$, $(T_{\alpha}(x))$ is a Cauchy net in $X$ so that convergent. Fix any $x\in X$. There is a positive integer $n$ such that $x=ny$ for some $y\in U$. This means that $T_{\alpha}(x)$ is also Cauchy so that convergent. Suppose $T_{\alpha}(x)\rightarrow \alpha_x\in X$. Define $T:X\to X$ via $T(x)=\alpha_x$. Since this convergence holds in ${\sf Hom_{nr}(X)}$, by Proposition \ref{00}, $T$ is also $nr$-bounded.

For the converse, assume that ${\sf Hom_{nr}(X)}$ is complete and $(x_{\alpha})$ is a Cauchy net in $X$. Suppose $W$ is an arbitrary zero neighborhood in $X$. Define $T_{\alpha}:X\to X$ via $T_{\alpha}(x)=x_{\alpha}x$. It can be verified that each $T_{\alpha}$ is a $nr$-bounded group homomorphism. Furthermore, $(T_{\alpha})$ is a Cauchy net in ${\sf Hom_{nr}(X)}$; by assumption, there is a zero neighborhood $U$ which is bounded. So, there is a zero neighborhood $V$ with $VU\subseteq W$. Choose index $\alpha_0$ such that $(x_{\alpha}-x_{\beta})\in V$ for each $\alpha\geq\alpha_0$ and $\beta\geq\alpha_0$ so that
\[(T_{\alpha}-T_{\beta})(U)=(x_{\alpha}-x_{\beta})U\subseteq VU\subseteq W.\]
\end{proof}

\end{document}